\documentclass[11pt, a4paper]{article}
\usepackage[utf8]{inputenc}
\usepackage[T1]{fontenc}
\usepackage[english]{babel}
\usepackage{amsmath, amsthm, amssymb, amsfonts}
\usepackage{mathtools}
\usepackage{geometry}
\usepackage{xcolor}
\usepackage{hyperref}
\usepackage{url}
\usepackage{indentfirst}

\DeclareMathOperator{\Ima}{im} 
\newtheorem{theorem}{Theorem}[section]
\newtheorem{lemma}[theorem]{Lemma}
\newtheorem{proposition}[theorem]{Proposition}
\newtheorem{corollary}[theorem]{Corollary}
\theoremstyle{definition}

\newtheorem{remark}[theorem]{Remark}

\newcommand{\D}{\mathbb{D}}
\newcommand{\R}{\mathbb{R}}

\title{On the Regularity and Clean Properties of Matrices over Dual Numbers}

\author{\small BERR\.{I}N \c SENT\" URK, NESL\.{I}HAN AY\c SEN \" OZBAY}

\begin{document}

\maketitle

\begin{abstract}

In this paper, we study regularity and  clean-type decompositions for matrices over dual numbers. We characterize the von Neumann regularity of dual matrices by establishing a necessary and sufficient compatibility condition between their real and dual components. We determine the conditions under which a dual matrix $\mathcal{M} = A + B\epsilon$ is regular, $\pi$-regular,  nil-clean, or strongly nil-clean. Using the solvability of Sylvester matrix equations, we show that the matrix ring over dual numbers $M_n(\mathbb{D})$ is strongly $\pi$-regular and strongly clean. Moreover, $M_n(\mathbb{D})$ is $\pi$-regular, clean, $r$-clean, strongly $r$-clean, $\mathrm{NR}$-clean, and strongly $\mathrm{NR}$-clean. On the other hand, $M_n(\mathbb{D})$ fails to be regular, strongly regular, nil-clean, strongly nil-clean and uniquely $\mathrm{NR}$-clean, although it inherits several strong decomposition properties from  $M_n(\R)$. 

    \vspace{0.3cm}
    \noindent \textbf{Keywords:} dual numbers, von Neumann regularity, strongly $\pi$-regular rings, clean rings, strongly clean rings, nil-clean rings, $r$-clean rings, $\mathrm{NR}$-clean rings, strongly $\mathrm{NR}$-clean rings. \\
    \noindent \textbf{AMS Subject Classification: 16E50, 16S50, 16U99}
\end{abstract}

\section{Introduction}
The ring of dual numbers, denoted by $\mathbb{D} = \{a + b\epsilon \mid a, b \in \mathbb{R}, \epsilon^2 = 0\}$, was introduced by Clifford \cite{Clifford1871} and study of its matrix representations has become an important topic in linear algebra, mechanics, kinematics, and robotics \cite{fis99}. Any $n \times n$ matrix over dual numbers, $\mathcal{M} \in M_n(\mathbb{D})$, can be uniquely expressed as a linear combination $\mathcal{M} = A + B\epsilon$, where $A$ and $B$ are any real matrices in $M_n(\mathbb{R})$.

In ring theory, structural properties that decompose elements into special components such as idempotents, units, and nilpotents play a central role. Throughout this paper, all rings are assumed to be associative with identity. A fundamental such property is von Neumann regularity, that is, an element $a$ in a ring $R$ is called \emph{von Neumann regular} (simply \emph{regular}) if  $a \in aRa$ \cite{Neumann1936}. Any element $a^{-}\in R$ satisfying $a= aa^{-}a$ is called an \emph{inner inverse} of $a$. A ring is called \emph{regular} if all of its elements are regular.  An element $a$ in a ring $R$ is called \emph{strongly regular} if $a \in Ra^2\cap a^2R$ \cite{diesl2013nil}. A ring $R$ is called a \emph{strongly regular ring} if every element in $R$ is strongly regular. Several equivalent conditions for an element to be strongly regular can be found in Nicholson \cite{Nicholson1999}. Moreover, relaxing the regularity condition to higher powers gives $\pi$-regularity; we say an element $a$ is \emph{$\pi$-regular} if $a^n$ is regular for some integer $n\geq 1$ \cite{McCoy1939}. Strengthening $\pi$-regularity by requiring commutativity yields strong $\pi$-regularity; an element $a$ in a ring is called \emph{strongly $\pi$-regular} if $a^n \in Ra^{n+1}\cap a^{n+1}R$ for some integer $n\geq 1$ \cite{diesl2013nil}.


On the clean side, a ring is called \textit{clean}, introduced by Nicholson \cite{Nicholson1977}, if every element can be written as the sum of a unit and an idempotent. If these two components commute, the ring is called \textit{strongly clean} \cite{Nicholson1999}. Similarly, Diesl \cite{diesl2013nil} introduced the concept of \textit{nil-clean rings}, where elements can be decomposed as the sum of an idempotent and a nilpotent. If these two components commute, the element is called \textit{strongly nil-clean}. An element $a$ in a ring is called \textit{r-clean} (or \textit{regular-clean}) if it can be written as the sum of a regular element and an idempotent \cite{Ashrafi2011rcleanR}, and \textit{strongly r-clean} if these two components commute \cite{sharma2018stronglyrclean}. Finally, a ring is called \textit{$\mathrm{NR}$-clean} if every element can be written as the sum of a regular element and a nilpotent \cite{Khashan2016}, and  \textit{strongly $\mathrm{NR}$-clean} if these two components additionally commute \cite{Ibraheem2021}.

Although the real matrix ring $M_n(\mathbb{R})$ is well known to be both von Neumann regular \cite{Goodearl1991} and strongly clean \cite{Nicholson1999}, these properties behave very differently under the extension to $M_n(\mathbb{D})$. Due to the existence of nonzero nilpotent elements arising from $\epsilon$, the ring $M_n(\mathbb{D})$ is not regular. However, certain global properties lift directly to $M_n(\mathbb{D})$. For example, $M_n(\mathbb{D})$ remains strongly $\pi$-regular, strongly clean, and strongly $\mathrm{NR}$-clean. At the level of individual elements, a dual matrix 
$\mathcal{M} = A + B\epsilon$ need not be regular. Its regularity is characterized by a compatibility condition between the real 
part $A$ and the dual part $B$. A similar compatibility condition characterizes $\pi$-regularity at each fixed power, but it is always satisfied for 
a sufficiently large power. For strong nil-cleanness, no such condition on $B$ is needed; $\mathcal{M}$ is 
strongly nil-clean precisely when its real part $A$ is.

The main idea behind our constructions is the connection between Fitting's Lemma and strongly clean decompositions, established by Nicholson \cite{Nicholson1999} and developed further by Diesl \cite{diesl2013nil} in the setting of nil clean rings. The characterization of strongly $\pi$-regular elements due to Diesl is the form we use in this paper. In module theory, Fitting's Lemma states that an endomorphism of a module of finite length decomposes it into a direct sum of two invariant submodules, on which the endomorphism is nilpotent and invertible, respectively. In our setting, for a real matrix $A$ in $M_n(\R)$, this decomposition of $\R^n$ yields an 
idempotent $E$ commuting with $A$ that splits $A$ into two diagonal blocks with no 
common eigenvalues, one nilpotent and one invertible. When we lift such a decomposition to a dual matrix $\mathcal{M} = A + B\epsilon$, 
the dual part of the lifted idempotent satisfies Sylvester matrix equations. Since 
the diagonal blocks of $A$ have no eigenvalues in common, these equations have 
unique solutions. This is the primary step in proving our main result that $M_n(\mathbb{D})$ is strongly 
$\pi$-regular, and consequently strongly clean.

The paper is organized as follows. In Section 2, we introduce the canonical projection map from $M_n(\mathbb{D})$ to $M_n(\R)$. We then characterize invertible, idempotent, and nilpotent dual matrices. Section 3 focuses on regular, strongly regular, $\pi$-regular and strongly $\pi$-regular properties. In Section 4, we examine the clean, strongly clean, nil-clean, strongly nil-clean, $r$-clean, strongly $r$-clean, $\mathrm{NR}$-clean  and strongly $\mathrm{NR}$-clean  properties. In section 5, we provide a summary comparing the global and local behaviors of these structures.


\section{Characterization of Some Special Matrices over Dual Numbers}

Let $\D=\{a+b\epsilon \mid a,b \in \R, \epsilon^2=0\}$ be the ring of dual numbers. We denote the ring of $n \times n$ matrices over $\D$ by $M_n(\mathbb{D})$.
Any matrix $\mathcal{M} \in M_n(\mathbb{D})$ can be uniquely expressed as $\mathcal{M}=A+B\epsilon$, where $A,B \in M_n(\R)$.

We define the \textit{real projection map} $\pi: M_n(\mathbb{D}) \to M_n(\R)$ by $\pi(A+B\epsilon)=A$.
\begin{lemma} \label{lemma:proj}
	The map $\pi$ is a surjective ring homomorphism. That is, for any $\mathcal{M}, \mathcal{N} \in M_n(\mathbb{D})$:
	\begin{itemize}
		\item[$(i)$] $\pi(\mathcal{M}+\mathcal{N})=\pi(\mathcal{M})+\pi(\mathcal{N})$,
		\item[$(ii)$] $\pi(\mathcal{M}\mathcal{N})=\pi(\mathcal{M})\pi(\mathcal{N})$.
	\end{itemize}
\end{lemma}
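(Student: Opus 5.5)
The plan is a direct computation from the definitions, using the unique representation of a dual matrix as $A+B\epsilon$ with $A,B\in M_n(\R)$. Write $\mathcal{M}=A+B\epsilon$ and $\mathcal{N}=C+D\epsilon$. I would first note that $\pi$ is well defined precisely because this representation is unique, so the real part $A$ of $\mathcal{M}$ is determined by $\mathcal{M}$. Surjectivity is immediate: any $A\in M_n(\R)$ equals $\pi(A+0\epsilon)$. I would also record that $\pi(I)=I$, since the identity of $M_n(\mathbb{D})$ is $I+0\epsilon$; this makes $\pi$ unital, as required for rings with identity.

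For $(i)$, entrywise addition gives $\mathcal{M}+\mathcal{N}=(A+C)+(B+D)\epsilon$. Applying $\pi$ yields $A+C=\pi(\mathcal{M})+\pi(\mathcal{N})$.

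For $(ii)$, I would expand the product using bilinearity of matrix multiplication. Since $\epsilon$ is a central scalar in $\D$, it commutes with real matrices, and $\epsilon^2=0$. Hence
\[
\mathcal{M}\mathcal{N}=AC+(AD+BC)\epsilon+BD\epsilon^2=AC+(AD+BC)\epsilon .
\]
By uniqueness of the representation, $\pi(\mathcal{M}\mathcal{N})=AC=\pi(\mathcal{M})\pi(\mathcal{N})$.

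No step here is a genuine obstacle. The only point needing care is justifying the expansion in $(ii)$, namely that $M_n(\mathbb{D})\cong M_n(\R)\otimes_{\R}\D$, so that $\epsilon$ can be pulled through matrix products. This follows by writing each entry as $a_{ij}+b_{ij}\epsilon$ and using distributivity in the commutative ring $\D$.
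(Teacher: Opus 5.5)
Your proposal is correct and follows the same route as the paper, which simply states that the result is straightforward from the definition of matrix operations over $\D$. Your explicit expansion $\mathcal{M}\mathcal{N}=AC+(AD+BC)\epsilon$, together with your remarks on well-definedness, surjectivity via $\pi(A+0\epsilon)=A$, and $\pi(I)=I$, fills in exactly the details the paper leaves implicit.
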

\begin{proof}
	The proof is straightforward from the definition of matrix operations in $\D$.
\end{proof}

\subsection{Invertible Matrices}
We characterize the units (invertible elements) of the ring $M_n(\mathbb{D})$.

\begin{theorem} \label{thm:invertible}
	Let $\mathcal{M} = A+B\epsilon \in M_n(\mathbb{D})$. Then $\mathcal{M}$ is invertible if and only if its real projection $A$ is invertible in $M_n(\R)$ (i.e., $\det(A) \neq 0$).
\end{theorem}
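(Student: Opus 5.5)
We prove the two implications separately, using the real projection $\pi$ of Lemma~\ref{lemma:proj} for necessity and an explicit inverse for sufficiency.

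\emph{Necessity.} Suppose $\mathcal{M}$ is invertible, with inverse $\mathcal{N}=C+D\epsilon$, so that $\mathcal{M}\mathcal{N}=\mathcal{N}\mathcal{M}=I$. The identity of $M_n(\mathbb{D})$ is $I+0\epsilon$, hence $\pi(I)=I$. Since $\pi$ is a ring homomorphism by Lemma~\ref{lemma:proj}, applying it gives $AC=CA=I$. Thus $A$ is invertible in $M_n(\R)$, equivalently $\det(A)\neq 0$. Alternatively, one can compare real parts of $(A+B\epsilon)(C+D\epsilon)=AC+(AD+BC)\epsilon$ directly.

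\emph{Sufficiency.} Suppose $\det(A)\neq 0$. We look for an inverse of the form $\mathcal{N}=A^{-1}+X\epsilon$. Expanding with $\epsilon^2=0$,
\[
(A+B\epsilon)(A^{-1}+X\epsilon)=I+(AX+BA^{-1})\epsilon .
\]
This equals $I$ exactly when $AX+BA^{-1}=0$, which forces $X=-A^{-1}BA^{-1}$. We then check the other side:
\[
(A^{-1}-A^{-1}BA^{-1}\epsilon)(A+B\epsilon)=I+(A^{-1}B-A^{-1}BA^{-1}A)\epsilon=I .
\]
So $\mathcal{N}$ is a two-sided inverse of $\mathcal{M}$. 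For matrices over the commutative ring $\D$, a one-sided inverse would already suffice, but verifying both sides avoids citing that fact.

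There is no real obstacle in this argument. The only step requiring care is checking that the single candidate $X=-A^{-1}BA^{-1}$ works on both sides, which the computation above does. As an optional alternative, one can use that $\D$ is commutative: a matrix over it is invertible iff its determinant is a unit of $\D$. Since $\det(\mathcal{M})=\det(A)+c\,\epsilon$ for some $c\in\R$, and $a+b\epsilon$ is a unit iff $a\neq 0$, this gives the same criterion. The direct computation is more elementary, so we use that.
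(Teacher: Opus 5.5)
Your proof is correct and follows the paper's own argument: necessity comes from applying the projection $\pi$ to $\mathcal{M}\mathcal{N}=I$, and sufficiency from the explicit inverse $A^{-1}-A^{-1}BA^{-1}\epsilon$. You also write out the left-inverse check that the paper dismisses with ``similarly'', which is a small gain in completeness.
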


\begin{proof}
	($\Rightarrow$) Suppose $\mathcal{M}$ is invertible. Then there exists $\mathcal{X}$ such that $\mathcal{M}\mathcal{X}=I$. Applying the projection map $\pi$, we have $\pi(\mathcal{M})\pi(\mathcal{X})=\pi(I)$, which implies $A\pi(\mathcal{X})=I$. Thus $A$ is invertible.
	
	($\Leftarrow$) Suppose $A$ is invertible. Consider the matrix $ \mathcal{X}=A^{-1}-A^{-1}BA^{-1} \epsilon $.
	Then
	\begin{align*}
	\mathcal{M}\mathcal{X}&=(A+B\epsilon)(A^{-1}-A^{-1}BA^{-1}\epsilon) \\
	&= AA^{-1}-A(A^{-1}BA^{-1})\epsilon+BA^{-1}\epsilon-B(A^{-1}BA^{-1})\epsilon^2 \\
	&= I-BA^{-1}\epsilon+BA^{-1}\epsilon-0 \\
	&= I.
	\end{align*}
	Similarly, $\mathcal{X}\mathcal{M} = I$. Thus $\mathcal{M}$ is invertible.
\end{proof}

\subsection{Idempotent Matrices}
In this section, we characterize idempotent matrices in $M_n(\mathbb{D})$. A matrix $\mathcal{M}$ is called idempotent if $\mathcal{M}^2=\mathcal{M}$.

\begin{theorem} \label{thm:idempotent}
    Let $\mathcal{M}=A+B\epsilon \in M_n(\mathbb{D})$. Then $\mathcal{M}$ is idempotent if and only if
    \begin{enumerate}
        \item[$(i)$] $A$ is an idempotent matrix in $M_n(\R)$, and
        \item[$(ii)$] $B$ satisfies the condition $AB+BA=B$.
    \end{enumerate}
\end{theorem}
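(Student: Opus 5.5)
The plan is to expand $\mathcal{M}^2$ directly and compare the real and dual parts; this is a purely algebraic identity argument. Since $\epsilon$ commutes with real matrices and $\epsilon^2=0$, we have
\begin{align*}
\mathcal{M}^2 = (A+B\epsilon)(A+B\epsilon) = A^2 + (AB+BA)\epsilon + B^2\epsilon^2 = A^2 + (AB+BA)\epsilon .
\end{align*}
One must keep the order of factors in $AB$ and $BA$, since real matrices do not commute; only $\epsilon$ is central. This is the only point needing care.

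Next, use the uniqueness of the representation $\mathcal{M}=A+B\epsilon$ with $A,B\in M_n(\R)$, recalled at the start of Section 2. The equation $\mathcal{M}^2=\mathcal{M}$ then holds if and only if two conditions hold simultaneously. Comparing real parts gives $A^2=A$. Comparing dual parts gives $AB+BA=B$.

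This yields both directions at once. For ($\Rightarrow$), conditions $(i)$ and $(ii)$ follow from the comparison. Alternatively, $(i)$ follows from Lemma~\ref{lemma:proj}, since $\pi(\mathcal{M})^2=\pi(\mathcal{M}^2)=\pi(\mathcal{M})$. For ($\Leftarrow$), substitute $A^2=A$ and $AB+BA=B$ into the expansion to get $\mathcal{M}^2=A+B\epsilon=\mathcal{M}$.

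There is no real obstacle here; the argument is a routine computation.
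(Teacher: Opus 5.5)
Your proposal is correct and follows essentially the same route as the paper: expand $(A+B\epsilon)^2 = A^2 + (AB+BA)\epsilon$, then compare real and dual parts via uniqueness of the representation, with the converse by direct substitution. Your remark about keeping the order in $AB$ and $BA$ and your alternative use of $\pi$ for $(i)$ are fine but not needed.
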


\begin{proof}
    Assume $\mathcal{M}^2=\mathcal{M}$. Then $$ (A+B\epsilon)^2 = A^2+(AB+BA)\epsilon=A+B\epsilon.$$ Therefore, we obtain $A^2=A$ and $AB+BA=B$.
    Conversely, if these conditions hold, it is easy to verify that $\mathcal{M}^2 = \mathcal{M}$.
\end{proof}

\begin{remark}\label{remark:off-diagonal}
Suppose that the conditions $(i)$ and $(ii)$ hold:  $A$ is an idempotent matrix in $M_n(\R)$ and $AB+BA=B$. 
   Then $A$ is diagonalizable with eigenvalues in $\{0,1\}$. Let $r=\mathrm{rank}\, A$, and choose $P\in GL_n(\R)$ with $P^{-1}AP=\begin{pmatrix} I_r & 0 \\ 0 & 0 \end{pmatrix}$. Let $A'=P^{-1}AP$ and $B'=P^{-1}BP$, and write $B'=\begin{pmatrix} B_{11} & B_{12} \\ B_{21} & B_{22} \end{pmatrix}$ with block sizes $r$ and $n-r$.
 If we conjugate $AB+BA=B$ by $P$, we get $A'B'+B'A'=B'$:  
$$ \begin{pmatrix} I_r & 0 \\ 0 & 0 \end{pmatrix} \begin{pmatrix} B_{11} & B_{12} \\ B_{21} & B_{22} \end{pmatrix}+ \begin{pmatrix} B_{11} & B_{12} \\ B_{21} & B_{22} \end{pmatrix} \begin{pmatrix} I_r & 0 \\ 0 & 0 \end{pmatrix} = \begin{pmatrix} 2B_{11} & B_{12} \\  B_{21} & 0 \end{pmatrix}= \begin{pmatrix} B_{11} & B_{12} \\ B_{21} & B_{22} \end{pmatrix}. $$
Hence $ B_{11}=0 $, and $B_{22}=0$. Thus the dual component $B'$ lies in the off-diagonal blocks:
$ B'=\begin{pmatrix} 0 & B_{12} \\ B_{21} & 0 \end{pmatrix}. $ Conversely, for any $B_{12}\in M_{r\times (n-r)}(\R)$ and $B_{21}\in M_{(n-r)\times r}(\R)$, the matrix $A'+B'\epsilon$ with $B'$ as above is idempotent since $A'^2=A'$ and $A'B'+B'A'=B'$ hold by direct computation.
Consequently, whenever a statement about an idempotent $\mathcal{M}=A+B\epsilon $ is invariant under conjugation by $ GL_n(\R)$, we may assume, without loss of generality, that $A= \begin{pmatrix} I_r & 0 \\ 0 & 0 \end{pmatrix}$ and $B=\begin{pmatrix} 0 & B_{12} \\ B_{21} & 0 \end{pmatrix}$ with $r=\mathrm{rank}\, A$.
\end{remark}

\subsection{Nilpotent Matrices}
Now, we investigate nilpotent elements. A matrix $\mathcal{M}$ is nilpotent if $\mathcal{M}^k = 0$ for some integer $k \ge 1$.

\begin{theorem} \label{thm:nilpotent}
    A dual matrix $\mathcal{M}=A+B\epsilon$ is nilpotent if and only if its real part $A$ is nilpotent.
\end{theorem}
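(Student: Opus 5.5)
The plan is to prove the two directions separately. The forward direction follows from the projection map of Lemma~\ref{lemma:proj}. The converse follows from an explicit expansion of powers of $A+B\epsilon$ using $\epsilon^2=0$.

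($\Rightarrow$) Suppose $\mathcal{M}^k=0$ for some $k\ge 1$. Since $\pi$ is a ring homomorphism by Lemma~\ref{lemma:proj}, we get $A^k=\pi(\mathcal{M})^k=\pi(\mathcal{M}^k)=\pi(0)=0$. Hence $A$ is nilpotent.

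($\Leftarrow$) Suppose $A^k=0$. First establish, by induction on $m$, the expansion
$$(A+B\epsilon)^m = A^m + \Big(\sum_{i=0}^{m-1} A^{i}BA^{m-1-i}\Big)\epsilon .$$
In the product of $m$ factors, every term containing two or more factors $B\epsilon$ vanishes because $\epsilon^2=0$ and $\epsilon$ is central. Alternatively, multiply the formula for $m$ by $A+B\epsilon$ and collect terms.

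Now take $m=2k$. Then $A^{2k}=0$. In each summand $A^iBA^{2k-1-i}$ we have $i+(2k-1-i)=2k-1$, so either $i\ge k$ or $2k-1-i\ge k$. Either way the summand contains a factor $A^k=0$, so the dual part vanishes as well. Hence $\mathcal{M}^{2k}=0$, and $\mathcal{M}$ is nilpotent.

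There is no real obstacle here. The only point requiring care is that $A$ and $B$ need not commute, so the binomial theorem cannot be used. This is why the dual part must be written as the noncommutative sum above, and why the exponent has to be raised from $k$ to $2k$ (or $2k-1$ also suffices) rather than $k$.
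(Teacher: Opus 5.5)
Your proof is correct and follows the paper's argument. The forward direction uses the homomorphism $\pi$, and the converse expands $\mathcal{M}^{2k}$ and observes that each dual summand $A^iBA^{2k-1-i}$ contains a factor $A^k$; your explicit pigeonhole step is a cleaner justification than the paper's one-line remark.

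One correction to your closing aside: $2k-1$ does \emph{not} suffice in general, because for $m=2k-1$ the summand $A^{k-1}BA^{k-1}$ survives. For instance, $A=\begin{pmatrix}0&1\\0&0\end{pmatrix}$ and $B=\begin{pmatrix}0&0\\1&0\end{pmatrix}$ give $A^2=0$ but $\mathcal{M}^2=I\epsilon$ and $\mathcal{M}^3=A\epsilon\neq 0$, so the exponent $2k$ is sharp.
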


\begin{proof}
    ($\Rightarrow$) Suppose $\mathcal{M}$ is nilpotent. Then $\mathcal{M}^k=0$ for some integer $k \ge 1$. Applying the homomorphism $\pi$, we get $\pi(\mathcal{M}^k)=(\pi(\mathcal{M}))^k=A^k=0$. Thus, $A$ is nilpotent.

    ($\Leftarrow$) Suppose $A$ is nilpotent with index $k$. Consider $\mathcal{M}^{2k}$: 
    $$ \mathcal{M}^{2k}=(A+B\epsilon)^{2k}=\underbrace{A^{2k}}_{0}+\left( \sum_{j=0}^{2k-1}A^jBA^{(2k-1)-j} \right)\epsilon .$$

    Since $\epsilon^2=0$, this expansion will contain terms with $A$ with power $ \geq k. $ Thus $\mathcal{M}$ is nilpotent.
\end{proof}

\subsection{Sylvester Equation}
In subsequent proofs, we will use Sylvester's well-known theorem \cite{Sylvester84}, \cite[Theorem~4.6]{horn94}, classically stated for matrices with complex entries, adapted here to the real entries.  
\begin{lemma}\label{lemma:Sylvester}
  Let $L\in M_a(\R) ,R \in M_b(\R)$ be given. The Sylvester Equation  $LX-XR=T$ has a unique solution $X \in M_{a,b}(\R)$ for each $T\in   M_{a,b}(\R)$ if and only if $L$ and $R$ have no eigenvalues in common.
\end{lemma}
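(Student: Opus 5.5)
The plan is to prove the real version of Sylvester's theorem directly. We view $X\mapsto LX-XR$ as a linear operator $\mathcal{S}$ on the finite-dimensional real vector space $M_{a,b}(\R)$. The statement "$LX-XR=T$ has a unique solution for each $T$" is exactly the statement that $\mathcal{S}$ is bijective. Since $\mathcal{S}$ is an endomorphism of a finite-dimensional space, this is equivalent to $\ker\mathcal{S}=0$. So the whole proof reduces to showing
\[
\ker\mathcal{S}=0 \iff \sigma(L)\cap\sigma(R)=\emptyset,
\]
where $\sigma$ denotes the spectrum over $\mathbb{C}$. Eigenvalues are understood in $\mathbb{C}$, as the statement implicitly intends.

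The key step is to compare with the complexification. The same formula defines $\mathcal{S}_\mathbb{C}$ on $M_{a,b}(\mathbb{C})$. In the standard basis, $\mathcal{S}$ and $\mathcal{S}_\mathbb{C}$ have the same real matrix, namely $I_b\otimes L - R^{T}\otimes I_a$ via vectorization. Hence $\det\mathcal{S}=\det\mathcal{S}_\mathbb{C}$, and $\mathcal{S}$ is invertible over $\R$ if and only if $\mathcal{S}_\mathbb{C}$ is invertible over $\mathbb{C}$. We may therefore invoke the classical complex statement \cite[Theorem~4.6]{horn94}.

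For completeness, I would also sketch that complex statement via eigenvalues of the Kronecker sum. Triangularize $L=U T_L U^{*}$ and $R=V T_R V^{*}$ by Schur. Then $I\otimes L-R^T\otimes I$ is similar to a triangular matrix whose diagonal entries are the differences $\lambda_i-\mu_j$, with $\lambda_i\in\sigma(L)$ and $\mu_j\in\sigma(R)$. So the eigenvalues of $\mathcal{S}_\mathbb{C}$ are exactly these differences, and $\det\mathcal{S}_\mathbb{C}=\prod_{i,j}(\lambda_i-\mu_j)$. This product is nonzero exactly when no common eigenvalue exists.

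Alternatively, the two directions can be argued by hand.
\begin{itemize}
\item If $\lambda$ is a common eigenvalue, take $Lx=\lambda x$ and $y^{T}R=\lambda y^{T}$. Then $X=xy^{T}\neq0$ lies in $\ker\mathcal{S}_\mathbb{C}$.
\item Conversely, suppose $LX=XR$. Then $p(L)X=Xp(R)$ for every polynomial $p$. Taking $p=\chi_R$ gives $\chi_R(L)X=0$. Since $\chi_R(L)$ is invertible when the spectra are disjoint, $X=0$.
\end{itemize}

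The main obstacle is handling real versus complex eigenvalues correctly. A shared nonreal eigenvalue gives a complex kernel element $xy^T$ that is not real. The determinant equality $\det\mathcal{S}=\det\mathcal{S}_\mathbb{C}$ resolves this: a nontrivial complex kernel forces a zero determinant, hence a nontrivial real kernel. Equivalently, the real and imaginary parts of $xy^T$ are real kernel elements, and at least one of them is nonzero. The Cayley–Hamilton direction already works over $\R$ as stated.
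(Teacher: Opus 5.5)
Your proof is correct, and its main line matches the paper's own treatment. The paper gives no argument for this lemma beyond citing the classical complex theorem (Sylvester; Horn--Johnson, Theorem~4.6) and asserting that it is ``adapted here to the real entries.''

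What you add is the justification of that adaptation, which the paper omits. The map $X\mapsto LX-XR$ is represented on both $M_{a,b}(\R)$ and $M_{a,b}(\mathbb{C})$ by the same real matrix $I_b\otimes L-R^{T}\otimes I_a$. So bijectivity over $\R$ and over $\mathbb{C}$ coincide, and the complex statement transfers verbatim.

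Your alternative hand argument works over $\R$ without Kronecker products or Schur forms, which suits a paper that stays in $M_n(\R)$. It has two parts:
\begin{itemize}
\item the kernel element $xy^{T}$, whose real or imaginary part is a nonzero real kernel element;
\item the Cayley--Hamilton step $\chi_R(L)X=X\chi_R(R)=0$, with $\chi_R(L)$ invertible.
\end{itemize}

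Your insistence that eigenvalues be taken in $\mathbb{C}$ is essential, not pedantic. With real eigenvalues only, the lemma is false: $L=R=\begin{pmatrix}0&-1\\1&0\end{pmatrix}$ have no real eigenvalues, but $X=I$ solves $LX-XR=0$. The paper's applications are unaffected. There the two spectra are either $\{0\}$ versus a set avoiding $0$ (nilpotent versus invertible block), or $\{1\}$ versus $\{0\}$, and both pairs are disjoint in $\mathbb{C}$.
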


\section{The Regularity Properties}

In this section, we will characterize the regular and $\pi$-regular elements of $M_n(\mathbb{D})$. We will show that although $M_n(\mathbb{D})$ is not regular, it is strongly 
$\pi$-regular.

\subsection{Regular and Strongly Regular Properties}
 Recall that an element $a$ in a ring $R$ is called  \emph{regular} if there exists $x\in R$ such that $a = axa$. A ring $R$ is \emph{regular} if every element of $R$ is regular.
\begin{proposition}\label{prop:regularCondition}
	A dual matrix $\mathcal{M} = A + B\epsilon$ is regular if and only if its real part $A$ is regular in $M_n(\R)$ and $(I-AA^{-})B(I-A^{-}A)=0$ for some inner inverse $A^{-}$ of $A$.
\end{proposition}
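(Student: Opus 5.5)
We expand the regularity equation $\mathcal{M}\mathcal{X}\mathcal{M}=\mathcal{M}$ with a general candidate $\mathcal{X}=X+Y\epsilon$, using $\epsilon^2=0$. This gives
\begin{align*}
\mathcal{M}\mathcal{X}\mathcal{M}=AXA+\bigl(AXB+AYA+BXA\bigr)\epsilon .
\end{align*}
So $\mathcal{M}$ is regular if and only if there are real matrices $X,Y$ with
\begin{align*}
AXA=A \quad\text{and}\quad AXB+AYA+BXA=B .
\end{align*}
The first condition says exactly that $X$ is an inner inverse of $A$. This also follows from Lemma~\ref{lemma:proj}: applying $\pi$ to $\mathcal{M}\mathcal{X}\mathcal{M}=\mathcal{M}$ gives $A\pi(\mathcal{X})A=A$. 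In particular $A$ is regular. (Over $\R$ this is automatic, but we keep it as stated.)

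For necessity, set $A^{-}=X$ and multiply the dual equation on the left by $(I-AA^{-})$ and on the right by $(I-A^{-}A)$. Since $AA^{-}A=A$, we have $(I-AA^{-})A=0$ and $A(I-A^{-}A)=0$. Hence all three terms $AXB$, $AYA$, $BXA$ vanish after this two-sided multiplication, because each begins with $A$ or ends with $A$. What remains is $0=(I-AA^{-})B(I-A^{-}A)$, which is the stated condition for the inner inverse $A^{-}=X$.

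For sufficiency, fix an inner inverse $A^{-}$ with $(I-AA^{-})B(I-A^{-}A)=0$ and take $X=A^{-}$. We must then solve $AYA=B-AA^{-}B-BA^{-}A$ for $Y$. Expanding the hypothesis gives
\begin{align*}
B-AA^{-}B-BA^{-}A=-AA^{-}BA^{-}A .
\end{align*}
So it suffices to take $Y=-A^{-}BA^{-}$, since then $AYA=-AA^{-}BA^{-}A$. Finally we verify directly that $\mathcal{X}=A^{-}-A^{-}BA^{-}\epsilon$ satisfies $\mathcal{M}\mathcal{X}\mathcal{M}=\mathcal{M}$. This mirrors the inverse formula in Theorem~\ref{thm:invertible}.

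The main obstacle is the sufficiency direction: recognizing that the compatibility condition, once expanded, rewrites the right-hand side $B-AA^{-}B-BA^{-}A$ as a matrix of the form $A(\cdot)A$, so that the linear equation $AYA=\cdot$ becomes solvable. The necessity direction is only a projection argument. Some care is also needed because the condition depends on the chosen inner inverse. Our argument shows the precise equivalence: the inner inverse $X=\pi(\mathcal{X})$ coming from any generalized inverse $\mathcal{X}$ of $\mathcal{M}$ satisfies the condition, and conversely any inner inverse satisfying it produces one. This matches the statement's ``for some inner inverse.''
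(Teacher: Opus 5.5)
Your proposal is correct and follows the paper's proof essentially step for step. For necessity, both multiply the dual-part equation on the left by $(I-AX)$ and on the right by $(I-XA)$. For sufficiency, both take $\mathcal{X}=A^{-}-A^{-}BA^{-}\epsilon$, using the expanded compatibility condition to rewrite $B-AA^{-}B-BA^{-}A$ as $-AA^{-}BA^{-}A$.
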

\begin{proof}
    ($\Rightarrow$) Assume $\mathcal{M} = A + B\epsilon$ is regular. By definition there exists $\mathcal{X}=X+Y\epsilon$ such that $\mathcal{M}=\mathcal{M}\mathcal{X}\mathcal{M}$. Then
    $$A + B\epsilon=(A + B\epsilon)(X+Y\epsilon)(A + B\epsilon)=AXA +(AXB+AYA+BXA)\epsilon. $$
    Thus, $A=AXA$, so $A$ is regular and $X$ is an inner inverse of $A$; write $X=A^{-}$. We also have
$B=AXB+AYA+BXA$, where $X=A^{-}$. Note that $A=AXA$ implies that $(I-AX)AXB=0$ and $BXA(I-XA)=0$. Hence
\begin{align*}
(I-AX)B(I-XA)&= (I-AX)(AXB+AYA+BXA)(I-XA)\\
             &=0+(I-AX)AYA(I-XA)+0 \\
             &=(AYA-AXAYA)(I-XA)\\
             &=(AYA-AYA)(I-XA)=0.
\end{align*}
Therefore, if $\mathcal{M}$ is regular, then $A$ is regular and $(I-AX)B(I-XA)=0$ for the inner inverse $A^{-}=X$.

    ($\Leftarrow$) Assume $A$ is regular in $M_n(\R)$ and there exists an inner inverse $A^{-}$ of $A$ such that the condition $(I-AA^{-})B(I-A^{-}A)=0$ holds. We fix this inner inverse $A^-$. We need to show that there exists an inner inverse $\mathcal{X}=X+Y\epsilon$ in $M_n(\mathbb{D})$ satisfying $\mathcal{M}=\mathcal{M}\mathcal{X}\mathcal{M}$, that is,
    $$A + B\epsilon=(A + B\epsilon)(X+Y\epsilon)(A + B\epsilon)=AXA +(AXB+AYA+BXA)\epsilon, $$ which is given by the equations $$A=AXA \mbox{ and } 
    AYA=B-AXB-BXA.$$ Setting  $X=A^{-}$ satisfies the first equation since $A^{-}$ is an inner inverse of $A$. Our aim is to show that the condition $(I-AA^{-})B(I-A^{-}A)=0$ implies that there exists $Y$  such that $AYA=B-AA^{-}B-BA^{-}A $. Expanding the condition, we get $$(B-AA^{-}B)(I-A^{-}A)=B-AA^{-}B-BA^{-}A+AA^{-}BA^{-}A=0.$$
   Then, $B-AA^{-}B-BA^{-}A=-AA^{-}BA^{-}A$, so $AYA=-AA^{-}BA^{-}A$. Therefore, it suffices to choose $Y=-A^{-}BA^{-}$. Hence there exists a dual matrix $\mathcal{X}=A^{-}-(A^{-}BA^{-})\epsilon$, which is an inner inverse of $\mathcal{M}$.
\end{proof}

\begin{theorem}\label{thm:notregular} The matrix ring over dual numbers $M_n(\mathbb{D})$ is not regular.
\end{theorem}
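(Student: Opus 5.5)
To prove the statement, I will exhibit a single explicit element of $M_n(\mathbb{D})$ that is not regular. Proposition~\ref{prop:regularCondition} reduces this to checking that the compatibility condition fails for every inner inverse. The natural candidate is $\mathcal{M}=\epsilon I_n = 0 + I_n\epsilon$, whose real part is $A=0$ and dual part is $B=I_n$.

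First I note that $A=0$ is trivially regular in $M_n(\R)$. Its inner inverses are all matrices $X$ with $0=0X0$, so every $X\in M_n(\R)$ is an inner inverse. For any such $A^{-}$, we have $AA^{-}=0$ and $A^{-}A=0$. The condition in Proposition~\ref{prop:regularCondition} then reads
$$(I-AA^{-})B(I-A^{-}A)=I\cdot I_n\cdot I=I_n\neq 0.$$
Since this fails for every inner inverse, Proposition~\ref{prop:regularCondition} shows that $\mathcal{M}$ is not regular. Hence $M_n(\mathbb{D})$ is not regular.

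As an independent sanity check, and an alternative argument that avoids the proposition, I can verify directly that $\epsilon I\,\mathcal{X}\,\epsilon I=\epsilon^2\mathcal{X}=0\neq\epsilon I$ for any $\mathcal{X}=X+Y\epsilon$. This works because $\epsilon I$ is central. This direct computation is essentially immediate, and I will probably include it as the main line since it is shortest.

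There is no real obstacle here. The only point needing care is to state that the argument works for every $n\ge 1$, including $n=1$, where it shows that $\D$ itself is not regular.
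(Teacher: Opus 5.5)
Your proposal is correct. The argument via Proposition~\ref{prop:regularCondition} is exactly the paper's proof, specialized to $B=I_n$. The paper takes an arbitrary $B\neq 0$ and uses the same observation, namely that $AA^{-}=A^{-}A=0$ for every inner inverse of $A=0$.

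The direct computation you intend as your main line is a genuinely more elementary route. Since $\epsilon$ is central, $(B\epsilon)(X+Y\epsilon)(B\epsilon)=BXB\,\epsilon^{2}=0$ for every $X,Y$. So no $B\epsilon$ with $B\neq 0$ is regular, and you need neither the proposition nor any description of the inner inverses of the real part. What this route gives up is the illustration that the compatibility condition $(I-AA^{-})B(I-A^{-}A)=0$ is the precise obstruction to regularity. That is the reason the paper argues through Proposition~\ref{prop:regularCondition}.

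Both arguments are complete and valid for every $n\geq 1$, including $n=1$.
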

	
\begin{proof} 
Consider the dual matrix $\mathcal{M} = A + B\epsilon$ with $A=0$ and $B\neq 0$. By Proposition \ref{prop:regularCondition}, $\mathcal{M}$ is regular if and only if $A$ is regular and $(I-AA^{-})B(I-A^{-}A)=0$ for some inner inverse $A^{-}$.  The zero matrix is regular and since $A=0$,  we have $AA^-=0$ and $A^-A=0$ for any choice of inner inverse $A^-$. The compatibility condition becomes $(I-0)B(I-0)=B\neq 0$. Hence $\mathcal{M}$ is not regular. Since  $M_n(\mathbb{D})$ contains a non-regular element, it is not a regular ring.
\end{proof}

Recall that an element $a$ in a ring $R$ is called \emph{strongly regular} if $a^2x=a=ya^2$ for some $x, y \in R$ \cite{Azumaya1954}. Equivalently, $a$ is strongly regular if $ aba=a$ and $ab=ba$ for some $b\in R$ \cite{Nicholson1999}. A ring $R$ is called  \emph{strongly regular} if every element in $R$ is strongly regular.

\begin{corollary}\label{corollary:notstrregular}
	The matrix ring over dual numbers $M_n(\mathbb{D})$ is not strongly regular.
\end{corollary}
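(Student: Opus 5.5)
The plan is to deduce this directly from Theorem \ref{thm:notregular}, using the standard fact that every strongly regular element is regular. First, I will recall the equivalent formulation quoted just before the statement: $a$ is strongly regular if and only if there is $b\in R$ with $aba=a$ and $ab=ba$. In particular, any strongly regular element $a$ satisfies $a\in aRa$, so it is regular. Consequently, every strongly regular ring is regular.

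Next, I will argue by contradiction. If $M_n(\mathbb{D})$ were strongly regular, then every element would be regular, so $M_n(\mathbb{D})$ would be a regular ring. This contradicts Theorem \ref{thm:notregular}.

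To keep the argument self-contained and independent of the quoted equivalence, I will also exhibit an explicit element. Take $\mathcal{M}=B\epsilon$ with $B\neq 0$, which is the element used in the proof of Theorem \ref{thm:notregular}. Then $\mathcal{M}^2=B^2\epsilon^2=0$. Hence for any $\mathcal{X}$ we get $\mathcal{M}^2\mathcal{X}=0\neq\mathcal{M}$, so the defining condition $a^2x=a$ fails outright. This route avoids the equivalence entirely, and no step is a real obstacle; the only point to state carefully is the implication from strongly regular to regular, or equivalently the nilpotency computation above.
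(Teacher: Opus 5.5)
Your proof is correct and matches the paper's: strongly regular implies regular, so Theorem \ref{thm:notregular} rules out strong regularity. Your supplementary check is also valid. For $\mathcal{M}=B\epsilon$ with $B\neq 0$ you get $\mathcal{M}^2=0$, so $\mathcal{M}^2\mathcal{X}=0\neq\mathcal{M}$ for every $\mathcal{X}$, which gives a direct counterexample without going through Theorem \ref{thm:notregular}.
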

\begin{proof} Obviously, every strongly regular ring is regular. Since $M_n(\mathbb{D})$ is not regular by Theorem \ref{thm:notregular}, it is not strongly regular.
\end{proof}

\subsection{$\pi$-Regular and Strongly $\pi$-Regular Properties}

Recall that an element $a$ in a ring $R$ is called \emph{$\pi$-regular} if there exists an element $x\in R$ such that $a^n = a^nxa^n$ for some integer $n\geq 1$ \cite{McCoy1939}. We first characterize the $\pi$-regular elements of 
$M_n(\mathbb{D})$ in terms of their real and dual parts.
\begin{proposition}
 A dual matrix $\mathcal{M} = A + B\epsilon$ is $\pi$-regular if and only if there exists an integer $ k \geq 1$ such that $A^k$ is regular in $M_n(\R)$ and $(I-A^k(A^k)^{-})C_k(I-(A^k)^{-}A^k)=0$ for some inner inverse $(A^k)^{-}$ of $A^k$, where $ C_k:=\sum_{i=0}^{k-1}A^iBA^{(k-1)-i}$. 
\end{proposition}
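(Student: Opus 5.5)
The plan is to reduce the statement to Proposition \ref{prop:regularCondition} applied to a suitable power of $\mathcal{M}$. By definition, $\mathcal{M}$ is $\pi$-regular exactly when $\mathcal{M}^k$ is regular in $M_n(\mathbb{D})$ for some integer $k\geq 1$. So the only real task is to write $\mathcal{M}^k$ in the form $A_k + B_k\epsilon$ with explicit real and dual parts. After that, Proposition \ref{prop:regularCondition} applies verbatim to $\mathcal{M}^k$.

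The first step is to compute $\mathcal{M}^k$. We expand $(A+B\epsilon)^k$ as a product of $k$ factors. Since $\epsilon^2=0$, every term containing two or more copies of $B\epsilon$ vanishes. The surviving terms are $A^k$ and the $k$ terms in which exactly one factor $B\epsilon$ appears, in position $i+1$ for $i=0,\dots,k-1$. This gives
\[
\mathcal{M}^k = A^k + \Bigl(\sum_{i=0}^{k-1} A^i B A^{(k-1)-i}\Bigr)\epsilon = A^k + C_k\epsilon .
\]
To make this rigorous, I would argue by induction on $k$. Multiplying $A^k + C_k\epsilon$ on the right by $A+B\epsilon$ gives $A^{k+1} + (C_kA + A^kB)\epsilon$. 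Then $C_kA + A^kB = \sum_{i=0}^{k} A^iBA^{k-i} = C_{k+1}$. This also matches the expansion already used in the proof of Theorem \ref{thm:nilpotent}.

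The second step is to apply Proposition \ref{prop:regularCondition} to the dual matrix $\mathcal{M}^k = A^k + C_k\epsilon$. It says that $\mathcal{M}^k$ is regular if and only if $A^k$ is regular in $M_n(\R)$ and $(I-A^k(A^k)^{-})C_k(I-(A^k)^{-}A^k)=0$ for some inner inverse $(A^k)^{-}$ of $A^k$. Both directions of the stated equivalence then follow by quantifying over $k$. If $\mathcal{M}$ is $\pi$-regular, take the $k$ with $\mathcal{M}^k$ regular and use the forward direction of Proposition \ref{prop:regularCondition}. Conversely, if such a $k$ exists, the backward direction gives that $\mathcal{M}^k$ is regular, with the explicit inner inverse $(A^k)^{-} - (A^k)^{-}C_k(A^k)^{-}\epsilon$. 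Hence $\mathcal{M}$ is $\pi$-regular.

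I expect no serious obstacle. The only point needing care is the bookkeeping in the power formula, namely that the noncommuting factors give the ordered sum $C_k$ rather than $kA^{k-1}B$. The induction above settles this. I would also note in passing that $A^k$ is automatically regular, since $M_n(\R)$ is regular. That condition is therefore harmless, and only the compatibility condition on $C_k$ has content.
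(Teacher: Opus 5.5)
Your proposal is correct and follows the paper's argument. The paper also writes $\mathcal{M}^k = A^k + C_k\epsilon$ and reruns the computation of Proposition~\ref{prop:regularCondition} with $A^k, C_k$ in place of $A, B$, reaching the same inner inverse $(A^k)^{-} - (A^k)^{-}C_k(A^k)^{-}\epsilon$; you simply cite that proposition directly, and you prove by induction the power formula that the paper only asserts.
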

\begin{proof}
($\Rightarrow$) Assume $\mathcal{M} = A + B\epsilon$ is $\pi$-regular. By definition there exist an integer $k\geq 1$ and $\mathcal{X}_k=X_k+Y_k\epsilon$ such that $\mathcal{M}^k=\mathcal{M}^k\mathcal{X}_k\mathcal{M}^k$. Define $ C_k:=\sum_{i=0}^{k-1}A^iBA^{(k-1)-i} \in M_n(\R)$ so that $\mathcal{M}^k=A^k+C_k\epsilon$. Then
\begin{align*}A^k + C_k\epsilon &=(A^k + C_k\epsilon)(X_k+Y_k\epsilon)(A^k + C_k\epsilon)\\
                                &=A^kX_kA^k +(A^kX_kC_k+A^kY_kA^k+C_kX_kA^k)\epsilon. 
\end{align*}
    Thus, $A^k=A^kX_kA^k$, so  $A^k$ is regular and $X_k$ is an inner inverse of $A^k$; write $X_k=(A^k)^{-}$. We also have
$C_k=A^kX_kC_k+A^kY_kA^k+C_kX_kA^k$. Note that $A^k=A^kX_kA^k$ implies that $(I-A^kX_k)A^kX_kC_k=0$ and $C_kX_kA^k(I-X_kA^k)=0$. Hence
\begin{align*}
(I-A^kX_k)C_k(I-X_kA^k)&= (I-A^kX_k)(A^kX_kC_k+A^kY_kA^k+C_kX_kA^k)(I-X_kA^k)\\
 &=0+(I-A^kX_k)A^kY_kA^k(I-X_kA^k)+0 \\
  &=(A^kY_kA^k-A^kX_kA^kY_kA^k)(I-X_kA^k)\\
  &=(A^kY_kA^k-A^kY_kA^k)(I-X_kA^k)=0.
\end{align*}
Therefore, if $\mathcal{M}$ is $\pi$-regular, then for some integer $k\geq 1$, $A^k$ is regular and $(I-A^k(A^k)^{-})C_k(I-(A^k)^{-}A^k)=0$ for the inner inverse $(A^k)^-=X_k$.

 ($\Leftarrow$) Assume there exists an integer $k\geq 1$ such that $A^k$ is regular in $M_n(\R)$ and $(I-A^k(A^k)^{-})C_k(I-(A^k)^{-}A^k)=0$ for some inner inverse $(A^k)^{-}$ of $A^k$, where $ C_k=\sum_{i=0}^{k-1}A^iBA^{(k-1)-i} \in M_n(\R)$. We fix such an inner inverse $(A^k)^{-}$.  We need to show that there exists  $\mathcal{X}_k=X_k+Y_k\epsilon$ in $M_n(\mathbb{D})$ satisfying $\mathcal{M}^k=\mathcal{M}^k\mathcal{X}_k\mathcal{M}^k$, that is,
\begin{align*} A^k + C_k\epsilon & =(A^k + C_k\epsilon)(X_k+Y_k\epsilon)(A^k + C_k\epsilon)\\
                                 & =A^kX_kA^k +(A^kX_kC_k+A^kY_kA^k+C_kX_kA^k)\epsilon, 
\end{align*}                                  
which is given by the equations $$A^k=A^kX_kA^k \mbox{ and } A^kY_kA^k=C_k-A^kX_kC_k-C_kX_kA^k.$$ Setting $X_k=(A^k)^{-}$ satisfies the first equation since  $(A^k)^-$ is an inner inverse of $A^k$. Our aim is to show that the condition implies that there exists $Y_k$  such that $A^kY_kA^k=C_k-A^k(A^k)^{-}C_k-C_k(A^k)^{-}A^k $.

   Expanding the condition $(I-A^k(A^k)^{-})C_k(I-(A^k)^{-}A^k)=0$, we get 
\begin{align*}   (C_k-A^k(A^k)^{-}C_k)(I-(A^k)^{-}A^k)&=C_k-A^k(A^k)^{-}C_k\\
                                                      &\quad -C_k(A^k)^{-}A^k+A^k(A^k)^{-}C_k(A^k)^{-}A^k\\
                                                      &=0.
\end{align*}
   Then, $C_k-A^k(A^k)^{-}C_k-C_k(A^k)^{-}A^k=-A^k(A^k)^{-}C_k(A^k)^{-}A^k$, so $A^kY_kA^k=-A^k(A^k)^{-}C_k(A^k)^{-}A^k$. Therefore, it suffices to choose $Y_k=-(A^k)^{-}C_k(A^k)^{-}$. Consequently, there exists a dual matrix $$\mathcal{X}_k=(A^k)^{-}-((A^k)^{-}C_k(A^k)^{-})\epsilon,$$ which is an inner inverse of $\mathcal{M}^k$, so $\mathcal{M}^k$ is regular and hence $\mathcal{M}$ is 
$\pi$-regular.
\end{proof}

\begin{remark} Setting $k=1$ yields $C_1=B$, which recovers Proposition \ref{prop:regularCondition}: a dual matrix $\mathcal{M}=A+B\epsilon$ is regular if and only if $A$ is regular and $(I-AA^{-})B(I-A^{-}A)=0$ for some inner inverse $A^{-}$. The counterexample $A=0$ and $B\neq 0$ shows that $M_n(\mathbb{D})$ fails to be regular at $k=1$. On the other hand, this is no longer a counterexample for $k\geq 2$. When $A=0$, every term in 
$ C_k=\sum_{i=0}^{k-1}A^iBA^{(k-1)-i}$ contains at least one factor of $A$, which forces $C_k=0$. Then $\mathcal{M}^k=A^k+C_k\epsilon=0$, and we know that the zero matrix is trivially regular. Therefore, the obstacle preventing regularity at $k=1$ disappears under higher powers $k\geq 2$.
\end{remark}

 Recall that an element $a$ in a ring $R$ is called \emph{strongly $\pi$-regular} if $a^n \in Ra^{n+1}\cap a^{n+1}R$ for some integer $n\geq 1$ \cite[Definition~2.2]{diesl2013nil}. Equivalently, $a$ is strongly $\pi$-regular if there exist an integer $n\geq 1$ and an element $x \in R$ such that $a^n = a^{n+1}x$ and $ax=xa$ \cite[Theorem~3]{Azumaya1954}. Alternatively, by \cite[Proposition~2.5]{diesl2013nil}, $a$ is 
strongly $\pi$-regular if and only if it can be written as $a=e+u$, $ea=ae$ and $eae$ is nilpotent, where $e$ is an idempotent and $u$ is a unit. We use this form in the following proofs. A ring $R$ is \emph{strongly $\pi$-regular} if every element of $R$ is strongly $\pi$-regular. 

\begin{lemma}\label{thm:realstronglyclean}
Every matrix $A \in M_n(\R)$ can be decomposed as $A = E + U$, where $E$ is an 
idempotent, $U$ is a unit, $EU = UE$, and $AE$ is nilpotent in $M_n(\R)$. In a 
basis adapted to the decomposition $\R^n=\Ima{(E)} \oplus \ker(E)$, 
the matrix $A$ is block diagonal,
$$A= \begin{pmatrix} A_{11} & 0 \\ 0 & A_{22} \end{pmatrix},$$
where $A_{11}$ is nilpotent, $A_{22}$ is invertible, and hence $A_{11}$ and 
$A_{22}$ have no eigenvalues in common.
\end{lemma}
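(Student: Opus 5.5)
We use Fitting's Lemma for the linear map $x\mapsto Ax$ on the finite-dimensional space $\R^n$. The chain $\ker A\subseteq \ker A^2\subseteq\cdots$ is increasing and the chain $\Ima A\supseteq \Ima A^2\supseteq\cdots$ is decreasing. By dimension counting, both stabilize at some $m\le n$. We then set $K=\ker A^m$ and $W=\Ima A^m$.

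The first step is to show $\R^n=K\oplus W$. For $x\in K\cap W$, write $x=A^m y$; then $A^{2m}y=0$, so $y\in\ker A^{2m}=\ker A^m$, which gives $x=0$. Rank–nullity yields $\dim K+\dim W=n$, so the sum is direct and fills $\R^n$. Both $K$ and $W$ are $A$-invariant. Moreover $A|_K$ is nilpotent since $(A|_K)^m=0$. Also $A|_W$ is surjective because $A W=\Ima A^{m+1}=W$, hence invertible.

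Next, let $E$ be the projection onto $K$ along $W$. Then $\Ima(E)=K$ and $\ker(E)=W$, matching the decomposition $\Ima(E)\oplus\ker(E)$ in the statement. Let $P$ be the matrix whose columns are a basis of $K$ followed by a basis of $W$. Then
$$P^{-1}AP=\begin{pmatrix} A_{11}&0\\0&A_{22}\end{pmatrix},\qquad P^{-1}EP=\begin{pmatrix} I&0\\0&0\end{pmatrix},$$
with $A_{11}$ nilpotent and $A_{22}$ invertible. The two blocks have no common eigenvalue: $A_{11}$ has only the eigenvalue $0$, while $0$ is not an eigenvalue of $A_{22}$. The degenerate cases $K=0$ or $W=0$ are covered by letting one block be empty.

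Finally, set $U=A-E$. In the same basis,
$$P^{-1}UP=\begin{pmatrix} A_{11}-I&0\\0&A_{22}\end{pmatrix}.$$
Here $A_{11}-I$ is invertible because $A_{11}$ is nilpotent, with inverse $-\sum_j A_{11}^j$, and $A_{22}$ is invertible; hence $U$ is a unit. Since $E$ and $U$ are both block diagonal and $E$ is scalar on each block, $EU=UE$. Also $AE$ corresponds to $\mathrm{diag}(A_{11},0)$, which is nilpotent. This agrees with the characterization from \cite[Proposition~2.5]{diesl2013nil}, since $EAE=AE$.

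The only substantive step is the direct sum decomposition, that is, stabilization of the chains together with $K\cap W=0$. Everything after it is block bookkeeping.
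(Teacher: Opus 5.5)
Your proof is correct and follows the paper's route: the paper likewise takes $E$ to be the projection onto $\ker(A^n)$ along $\Ima(A^n)$ given by Fitting's lemma, and sets $U=A-E=\mathrm{diag}(A_{11}-I_r,A_{22})$. The only difference is that you prove the Fitting decomposition directly via the stabilization index $m$, whereas the paper simply cites it. Your subspaces are the same as the paper's, since $\ker A^m=\ker A^n$ and $\Ima A^m=\Ima A^n$.
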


\begin{proof}
By Fitting's lemma, $\R^n = \ker(A^n) \oplus \Ima(A^n)$, where 
$A(\ker(A^n)) \subseteq \ker(A^n)$ and $A(\Ima(A^n)) \subseteq \Ima(A^n)$. 
Moreover, $A$ is nilpotent on $\ker(A^n)$ and invertible on $\Ima(A^n)$.
Write each element $v\in \R^n$ uniquely as $v=v_1+v_2$ with $v_1\in \ker(A^n)$ and $v_2\in \Ima(A^n)$, and define $E$ in $M_n(\R)$ by $Ev=v_1$. Then
$\Ima(E) = \ker(A^n)$, $\ker(E) = \Ima(A^n) $, and $E^2 = E$. Since both summands are 
$A$-invariant, $EA = AE$. In an adapted basis, 
$$E = \begin{pmatrix} I_r & 0 \\ 0 & 0 \end{pmatrix} \mbox{ and }
A = \begin{pmatrix} A_{11} & 0 \\ 0 & A_{22} \end{pmatrix},$$
where $r = \dim\ker(A^n)$. Since $A_{11}$ and $A_{22}$ are the matrices of the restrictions of $A$ to $\ker(A^n)$ and $\Ima(A^n)$, respectively, $A_{11}$ is nilpotent, and $A_{22}$ is invertible. In 
particular, $AE = \begin{pmatrix} A_{11} & 0 \\ 0 & 0 \end{pmatrix}$ is nilpotent. 
Set
$$U = A - E = \begin{pmatrix} A_{11} - I_r & 0 \\ 0 & A_{22} \end{pmatrix}.$$
Since $A_{11}$ is nilpotent, $A_{11} - I_r$ is invertible. Because $A_{22}$ is also 
invertible, $U$ is a unit, and $EU = UE$. Finally, the only eigenvalue of $A_{11}$ 
is zero, while zero is not an eigenvalue of $A_{22}$. Hence $A_{11}$ and $A_{22}$ 
have no common eigenvalues.
\end{proof}

\begin{corollary}\label{cor:realstrpireg}
The matrix ring $M_n(\R)$ is strongly $\pi$-regular.
\end{corollary}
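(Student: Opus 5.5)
The plan is to apply Diesl's characterization \cite[Proposition~2.5]{diesl2013nil}, recalled just before Lemma~\ref{thm:realstronglyclean}. By that characterization, an element $a$ is strongly $\pi$-regular if and only if $a=e+u$ with $e$ idempotent, $u$ a unit, $ea=ae$, and $eae$ nilpotent. Fix an arbitrary $A\in M_n(\R)$. Lemma~\ref{thm:realstronglyclean} supplies an idempotent $E$ and a unit $U$ with $A=E+U$ and $EU=UE$, and with $AE$ nilpotent. It then remains to match these data to Diesl's conditions.

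The matching consists of two short verifications.
\begin{itemize}
\item[$(i)$] Commutation: since $A=E+U$ and $E$ commutes with both $E$ and $U$, we get $EA=E+EU=E+UE=AE$.
\item[$(ii)$] Nilpotency of $EAE$: using $EA=AE$ and $E^2=E$, we have $EAE=AE^2=AE$, which is nilpotent by Lemma~\ref{thm:realstronglyclean}.
\end{itemize}
Alternatively, $(ii)$ can be read off directly in the adapted basis, where $EAE=\begin{pmatrix} A_{11} & 0\\ 0 & 0\end{pmatrix}$ with $A_{11}$ nilpotent.

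Since $A$ was arbitrary, every element of $M_n(\R)$ is strongly $\pi$-regular, and hence the ring is strongly $\pi$-regular. There is no real obstacle here; the only point needing care is confirming that the hypotheses of Diesl's criterion are exactly the ones provided by Lemma~\ref{thm:realstronglyclean}.

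As a sanity check that avoids Diesl's criterion, one can verify $A^n\in M_n(\R)A^{n+1}\cap A^{n+1}M_n(\R)$ directly. In the block form, take $X=\begin{pmatrix}0&0\\0&A_{22}^{-1}\end{pmatrix}$, which commutes with $A$. Because $A_{11}^n=0$, this gives $A^{n+1}X=A^n=XA^{n+1}$, and so $A$ is strongly $\pi$-regular in the sense of Azumaya \cite{Azumaya1954}.
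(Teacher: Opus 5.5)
Your proposal is correct and follows essentially the same route as the paper's proof. Both take $A=E+U$ from Lemma~\ref{thm:realstronglyclean}, deduce $EA=AE$ and that $EAE=AE$ is nilpotent, and then invoke Diesl's Proposition~2.5. Your supplementary check with $X=\mathrm{diag}(0,A_{22}^{-1})$ in the adapted basis is also valid: since $A_{11}^n=0$, you get $A^{n+1}X=XA^{n+1}=A^n$, which verifies the defining condition $A^n\in M_n(\R)A^{n+1}\cap A^{n+1}M_n(\R)$ directly, without needing the citation.
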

\begin{proof}
Let $A \in M_n(\R)$. By Lemma~\ref{thm:realstronglyclean}, $A = E + U$, where $E$ 
is an idempotent, $U$ is a unit, $EU = UE$ (equivalently $EA = AE$), and $AE$ is 
nilpotent. Since $EA = AE$ and $E^2 = E$, we have $EAE = AE$; hence $EAE$ is 
nilpotent. By \cite[Proposition~2.5]{diesl2013nil}, $A$ is strongly $\pi$-regular. 
Since $A$ was arbitrary, $M_n(\R)$ is strongly $\pi$-regular.
\end{proof}

\begin{theorem}\label{thm:strpireg}
The matrix ring over dual numbers $M_n(\mathbb{D})$ is strongly $\pi$-regular.
\end{theorem}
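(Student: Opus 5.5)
Take an arbitrary $\mathcal{M}=A+B\epsilon\in M_n(\mathbb{D})$. The aim is to produce an idempotent $\mathcal{E}$ with $\mathcal{E}\mathcal{M}=\mathcal{M}\mathcal{E}$, $\mathcal{M}-\mathcal{E}$ a unit, and $\mathcal{E}\mathcal{M}\mathcal{E}$ nilpotent, and then invoke \cite[Proposition~2.5]{diesl2013nil}. The construction lifts the real decomposition of Lemma~\ref{thm:realstronglyclean} to the dual level.

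First, Lemma~\ref{thm:realstronglyclean} gives $A=E+U$. Conjugating by a real invertible $P$, which is a ring automorphism of $M_n(\mathbb{D})$ and so preserves idempotents, units, nilpotents and commutation, we may assume
$$A=\begin{pmatrix} A_{11}&0\\0&A_{22}\end{pmatrix},\qquad E=\begin{pmatrix} I_r&0\\0&0\end{pmatrix},$$
with $A_{11}$ nilpotent and $A_{22}$ invertible. Write $B=\begin{pmatrix} B_{11}&B_{12}\\B_{21}&B_{22}\end{pmatrix}$.

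We seek $\mathcal{E}=E+F\epsilon$. By Theorem~\ref{thm:idempotent} and Remark~\ref{remark:off-diagonal}, $\mathcal{E}$ is idempotent exactly when
$$F=\begin{pmatrix}0&F_{12}\\F_{21}&0\end{pmatrix}.$$
Commutation $\mathcal{E}\mathcal{M}=\mathcal{M}\mathcal{E}$ requires $EA=AE$, which holds, together with $EB+FA=AF+BE$. Comparing blocks:
\begin{itemize}
\item The diagonal blocks give $B_{11}=B_{11}$ and $0=0$, so they impose nothing.
\item The $(1,2)$ block gives $B_{12}+F_{12}A_{22}=A_{11}F_{12}$, that is, $A_{11}F_{12}-F_{12}A_{22}=B_{12}$.
\item The $(2,1)$ block gives $F_{21}A_{11}=A_{22}F_{21}+B_{21}$, that is, $A_{22}F_{21}-F_{21}A_{11}=-B_{21}$.
\end{itemize}
Both are Sylvester equations in which the pair of coefficient matrices is $A_{11}$ and $A_{22}$. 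These have no common eigenvalue: the only eigenvalue of $A_{11}$ is $0$, and $0$ is not an eigenvalue of $A_{22}$. So Lemma~\ref{lemma:Sylvester} gives (unique) solutions $F_{12}$ and $F_{21}$. This step is the main obstacle; everything else is bookkeeping. The signs and the ordering $L=A_{11},R=A_{22}$ versus $L=A_{22},R=A_{11}$ must be checked carefully, but the no-common-eigenvalue hypothesis is symmetric, so either ordering works.

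It remains to verify the other two conditions.
\begin{itemize}
\item $\mathcal{U}:=\mathcal{M}-\mathcal{E}=(A-E)+(B-F)\epsilon$ has real part $U=A-E$, which is invertible. Hence $\mathcal{U}$ is a unit by Theorem~\ref{thm:invertible}, and it commutes with $\mathcal{E}$ because $\mathcal{M}$ does.
\item $\mathcal{E}\mathcal{M}\mathcal{E}$ has real part $\pi(\mathcal{E}\mathcal{M}\mathcal{E})=EAE=AE$ by Lemma~\ref{lemma:proj}, and $AE$ is nilpotent by Lemma~\ref{thm:realstronglyclean}. So $\mathcal{E}\mathcal{M}\mathcal{E}$ is nilpotent by Theorem~\ref{thm:nilpotent}.
\end{itemize}

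Thus $\mathcal{M}=\mathcal{E}+\mathcal{U}$ with $\mathcal{E}\mathcal{M}=\mathcal{M}\mathcal{E}$ and $\mathcal{E}\mathcal{M}\mathcal{E}$ nilpotent. By \cite[Proposition~2.5]{diesl2013nil}, $\mathcal{M}$ is strongly $\pi$-regular. Undoing the conjugation by $P$ and using that $\mathcal{M}$ was arbitrary, $M_n(\mathbb{D})$ is strongly $\pi$-regular.
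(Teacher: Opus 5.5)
Your proof is correct and is essentially the paper's argument. You lift the Fitting idempotent $E$ of Lemma~\ref{thm:realstronglyclean} to $\mathcal{E}=E+F\epsilon$ with off-diagonal $F$, obtained from the same two Sylvester equations $A_{11}F_{12}-F_{12}A_{22}=B_{12}$ and $A_{22}F_{21}-F_{21}A_{11}=-B_{21}$, and then conclude via Theorems~\ref{thm:invertible} and~\ref{thm:nilpotent} together with Diesl's Proposition~2.5; the only cosmetic differences are that you make the real change of basis explicit as an inner automorphism and verify nilpotency of $\mathcal{E}\mathcal{M}\mathcal{E}$ directly rather than of $\mathcal{M}\mathcal{E}$.
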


\begin{proof}
Let $\mathcal{M} = A + B\epsilon$ be any dual matrix. By 
Lemma~\ref{thm:realstronglyclean}, the real part $A$ can be decomposed as 
$A = E + U$, where $E$ is an idempotent, $U$ is a unit, $EU = UE$ (equivalently 
$EA = AE$), and $AE$ is nilpotent in $M_n(\R)$.

Our aim is to show that $\mathcal{M} = \mathcal{E} + \mathcal{U}$, where 
$\mathcal{E}$ is an idempotent, $\mathcal{U}$ is a unit, 
$\mathcal{E}\mathcal{M} = \mathcal{M}\mathcal{E}$, and $\mathcal{M}\mathcal{E}$ is 
nilpotent in $M_n(\mathbb{D})$. As in the real case, for such $\mathcal{E}$ we have 
$\mathcal{E}\mathcal{M}\mathcal{E} = \mathcal{M}\mathcal{E}$, so these conditions 
imply that $\mathcal{M}$ is strongly $\pi$-regular by 
\cite[Proposition~2.5]{diesl2013nil}.

Let $\mathcal{E} = E + X\epsilon$. Recall Theorem~\ref{thm:idempotent}: 
$\mathcal{E}$ is idempotent if and only if $E^2 = E$ and
\begin{equation}\label{eq11:X}
X = EX + XE.
\end{equation}
By Remark~\ref{remark:off-diagonal}, $X$ is off-diagonal,
$$X = \begin{pmatrix} 0 & X_{12} \\ X_{21} & 0 \end{pmatrix}.$$

Now consider commutativity:
\begin{align*}
\mathcal{E}\mathcal{M} &= (E + X\epsilon)(A + B\epsilon) = EA + (EB + XA)\epsilon,\\
\mathcal{M}\mathcal{E} &= (A + B\epsilon)(E + X\epsilon) = AE + (BE + AX)\epsilon.
\end{align*}
Since $EA = AE$, we have $\mathcal{E}\mathcal{M} = \mathcal{M}\mathcal{E}$ if and 
only if
\begin{equation}\label{eq22:AX-XA}
AX - XA = EB - BE.
\end{equation}
We need to show that there exists $X$ satisfying 
Equations~\eqref{eq11:X} and~\eqref{eq22:AX-XA}. By 
Lemma~\ref{thm:realstronglyclean}, in a basis adapted to 
$\R^n = \Ima(E) \oplus \ker(E)$ we have
$$E = \begin{pmatrix} I_r & 0 \\ 0 & 0 \end{pmatrix}, \qquad 
A = \begin{pmatrix} A_{11} & 0 \\ 0 & A_{22} \end{pmatrix},$$
where $A_{11}$ and $A_{22}$ have no eigenvalues in common.

Let $B = \begin{pmatrix} B_{11} & B_{12} \\ B_{21} & B_{22} \end{pmatrix}$. 
Substituting $E$, $A$, $B$, and $X$ into Equation~\eqref{eq22:AX-XA} yields
$$\begin{pmatrix} 0 & A_{11}X_{12} - X_{12}A_{22} \\ 
A_{22}X_{21} - X_{21}A_{11} & 0 \end{pmatrix}
= \begin{pmatrix} 0 & B_{12} \\ -B_{21} & 0 \end{pmatrix}.$$
We have
\begin{align}
A_{11}X_{12} - X_{12}A_{22} &= B_{12}, \label{eq:sylvester11}\\
A_{22}X_{21} - X_{21}A_{11} &= -B_{21}. \label{eq:sylvester22}
\end{align}
By Lemma~\ref{lemma:Sylvester}, Equations~\eqref{eq:sylvester11} 
and~\eqref{eq:sylvester22} yield unique matrix solutions $X_{12}$ and $X_{21}$. 
Since this solution $X$ is off-diagonal in the chosen basis, we have $EX + XE = X$, 
so Equation~\eqref{eq11:X} holds. Hence $\mathcal{E} = E + X\epsilon$ is idempotent 
by Theorem~\ref{thm:idempotent}, and 
$\mathcal{E}\mathcal{M} = \mathcal{M}\mathcal{E}$ by Equation~\eqref{eq22:AX-XA}.

Set $\mathcal{U} = \mathcal{M} - \mathcal{E}$. Then $\mathcal{U} =(A+B\epsilon)-(E+X\epsilon)= U + (B - X)\epsilon$. By 
Theorem~\ref{thm:invertible}, $\mathcal{U}$ is invertible, so it is a unit.

Finally, we have $\mathcal{M}\mathcal{E} = AE + (BE + AX)\epsilon$ and $AE$ is 
nilpotent. By Theorem~\ref{thm:nilpotent}, $\mathcal{M}\mathcal{E}$ is nilpotent. 
Hence $\mathcal{M}$ is strongly $\pi$-regular. Since every element 
of $M_n(\mathbb{D})$ is strongly $\pi$-regular, $M_n(\mathbb{D})$ is strongly $\pi$-regular.
\end{proof}

\begin{proposition}\label{prop:piregular}
The matrix ring  over dual numbers $ M_n(\mathbb{D})$ is $\pi$-regular.
\end{proposition}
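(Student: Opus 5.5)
The plan is to deduce this directly from Theorem~\ref{thm:strpireg}, using the standard fact that every strongly $\pi$-regular element is $\pi$-regular. Let $\mathcal{M}\in M_n(\mathbb{D})$. By Theorem~\ref{thm:strpireg}, $\mathcal{M}$ is strongly $\pi$-regular. By the Azumaya characterization recalled before Lemma~\ref{thm:realstronglyclean}, there are an integer $k\geq 1$ and $\mathcal{X}\in M_n(\mathbb{D})$ with
\[
\mathcal{M}^k=\mathcal{M}^{k+1}\mathcal{X}, \qquad \mathcal{M}\mathcal{X}=\mathcal{X}\mathcal{M}.
\]

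The key step is to show that $\mathcal{M}^k$ is regular, and more precisely that $\mathcal{M}^k=\mathcal{M}^k\mathcal{X}^k\mathcal{M}^k$. Since $\mathcal{X}$ commutes with $\mathcal{M}$, iterating the identity gives
\[
\mathcal{M}^k=\mathcal{M}^{k+1}\mathcal{X}=\mathcal{M}^{k+2}\mathcal{X}^2=\cdots=\mathcal{M}^{2k}\mathcal{X}^k .
\]
At each step we substitute $\mathcal{M}^k=\mathcal{M}^{k+1}\mathcal{X}$ inside $\mathcal{M}^{k+j}\mathcal{X}^j=\mathcal{M}^j\mathcal{M}^k\mathcal{X}^j$. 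Using commutativity once more,
\[
\mathcal{M}^{2k}\mathcal{X}^k=\mathcal{M}^k\mathcal{X}^k\mathcal{M}^k .
\]
Hence $\mathcal{M}^k$ is regular with inner inverse $\mathcal{X}^k$, so $\mathcal{M}$ is $\pi$-regular. Since $\mathcal{M}$ was arbitrary, $M_n(\mathbb{D})$ is $\pi$-regular.

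As an alternative consistent with the preceding proposition, one could use the decomposition in the proof of Theorem~\ref{thm:strpireg} instead. There $\mathcal{M}\mathcal{E}$ is nilpotent and $\mathcal{M}(I-\mathcal{E})$ is invertible on the complementary corner. For large $k$ this gives $\mathcal{M}^k=\mathcal{M}^k(I-\mathcal{E})$, which is a unit of the corner ring $(I-\mathcal{E})M_n(\mathbb{D})(I-\mathcal{E})$ and hence regular. The first route is shorter, so I would use it.

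No step is a serious obstacle. The only care needed is to keep the commutativity bookkeeping correct in the iteration, which is a routine induction on $j$.
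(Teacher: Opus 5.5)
Your proof is correct and follows the paper's route. Both deduce $\pi$-regularity from Theorem~\ref{thm:strpireg} via the implication ``strongly $\pi$-regular $\Rightarrow$ $\pi$-regular,'' which the paper simply cites from Azumaya, whereas you verify it directly by showing $\mathcal{M}^k=\mathcal{M}^k\mathcal{X}^k\mathcal{M}^k$. (Incidentally, the iteration $\mathcal{M}^k=\mathcal{M}^{2k}\mathcal{X}^k$ needs no commutativity; commutativity is used only in the final step $\mathcal{M}^{2k}\mathcal{X}^k=\mathcal{M}^k\mathcal{X}^k\mathcal{M}^k$.)
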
	
\begin{proof} By Theorem  \ref{thm:strpireg},  $ M_n(\mathbb{D}) $ is strongly $\pi$-regular. It is well known that any strongly $\pi$-regular ring is necessarily $\pi$-regular \cite[Corollary to Theorem 3]{Azumaya1954}. Thus $ M_n(\mathbb{D}) $ is $\pi$-regular.
\end{proof}

\section{The Clean Properties}

Now, we investigate clean-type properties of $M_n(\mathbb{D})$.

\subsection{Clean and Strongly Clean Properties}

\indent Recall that an element $a$ in a ring $R$ is called \textit{clean} if $ a = e + u $ where $e^2 = e \in R$ and $u$ is a unit in $R$ \cite{Nicholson1977}. A ring $R$ is called a \textit{clean ring} if every element in $R$ is clean. Furthermore, if these elements commute (i.e., $eu = ue$, or equivalently $ea = ae$), the element $a$ is called \textit{strongly clean}, and the ring $R$ is said to be a \textit{strongly clean ring} if all of its elements are strongly clean.

\begin{theorem}\label{theorem:clean}
	The matrix ring  over dual numbers $M_n(\mathbb{D})$ is a clean ring.
\end{theorem}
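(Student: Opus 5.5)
The quickest route is to derive cleanness from strong $\pi$-regularity, which Theorem~\ref{thm:strpireg} already establishes. The proof of that theorem does more than is needed: for an arbitrary $\mathcal{M}=A+B\epsilon\in M_n(\mathbb{D})$ it constructs an idempotent $\mathcal{E}=E+X\epsilon$ and a unit $\mathcal{U}=U+(B-X)\epsilon$ with $\mathcal{M}=\mathcal{E}+\mathcal{U}$. So I will extract this decomposition directly from that proof. This shows every element is clean (in fact strongly clean, since $\mathcal{E}\mathcal{M}=\mathcal{M}\mathcal{E}$), and hence $M_n(\mathbb{D})$ is a clean ring. Alternatively, one can cite the known fact that strongly $\pi$-regular rings are strongly clean \cite{Nicholson1999}, and strongly clean implies clean.

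For a self-contained argument that avoids the Sylvester machinery, I would instead lift a clean decomposition of the real part. The steps are as follows.

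\begin{enumerate}
\item Write $A=E+U$ with $E^2=E$ and $U$ invertible in $M_n(\R)$. This is available from Lemma~\ref{thm:realstronglyclean}, or simply because $M_n(\R)$ is clean.
\item Take $\mathcal{E}=E$, viewed as a dual matrix with zero dual part. It is idempotent by Theorem~\ref{thm:idempotent}, since $X=0$ trivially satisfies $EX+XE=X$.
\item Set $\mathcal{U}=\mathcal{M}-\mathcal{E}=U+B\epsilon$. Its real part $U$ is invertible, so $\mathcal{U}$ is a unit by Theorem~\ref{thm:invertible}.
\end{enumerate}

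This gives $\mathcal{M}=\mathcal{E}+\mathcal{U}$, a clean decomposition.

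There is essentially no obstacle. The only point to check is that the unit criterion, Theorem~\ref{thm:invertible}, depends only on the real projection, so the dual part $B$ can be absorbed entirely into the unit. The commuting requirement in the strongly clean case is what forces the nonzero correction $X\epsilon$ handled in Theorem~\ref{thm:strpireg}; it is not needed here.
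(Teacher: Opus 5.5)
Your self-contained argument is exactly the paper's proof: take a clean decomposition $A=E+U$ in $M_n(\R)$, keep $\mathcal{E}=E$, and absorb the dual part into $\mathcal{U}=U+B\epsilon$, which is a unit by Theorem~\ref{thm:invertible}. Your first route, extracting $\mathcal{M}=\mathcal{E}+\mathcal{U}$ from the proof of Theorem~\ref{thm:strpireg}, is also valid and not circular, since that theorem comes earlier; it is the argument the paper uses for Theorem~\ref{theorem:stronglyclean}.
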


\begin{proof}
	Let $\mathcal{M}=A+B\epsilon$ be any dual matrix. Since $M_n(\R)$ is known to be a clean ring  \cite[Corollary~6]{CamilloYu1994}, the real part $A$ can be decomposed as $A=E+U$, where $E$ is an idempotent and $U$ is a unit in $M_n(\R)$.
	
	Consider the decomposition $\mathcal{M} = \mathcal{E} + \mathcal{U}$, where 
$\mathcal{E} = E$ and $\mathcal{U} = U + B\epsilon$. Since $E$ is a real idempotent, $\mathcal{E}$ is clearly idempotent in $M_n(\mathbb{D})$. Since $U$ is a unit, by Theorem \ref{thm:invertible}, $\mathcal{U}$ is also a unit in $M_n(\mathbb{D})$. Thus, every matrix in $M_n(\mathbb{D})$ has a clean decomposition.
\end{proof}

\begin{theorem} \label{theorem:stronglyclean}
The matrix ring over dual numbers $M_n(\mathbb{D})$ is strongly clean.
\end{theorem}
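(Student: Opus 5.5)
The plan is to reuse the decomposition built in the proof of Theorem~\ref{thm:strpireg}. Fix an arbitrary $\mathcal{M}=A+B\epsilon\in M_n(\mathbb{D})$. By Lemma~\ref{thm:realstronglyclean}, write $A=E+U$ with $E^2=E$, $U$ a unit and $EU=UE$. We may work in a basis adapted to $\R^n=\Ima(E)\oplus\ker(E)$, since conjugation by $GL_n(\R)$ preserves idempotents, units and commutativity. In that basis $E=\begin{pmatrix} I_r&0\\0&0\end{pmatrix}$ and $A=\begin{pmatrix} A_{11}&0\\0&A_{22}\end{pmatrix}$, where $A_{11}$ is nilpotent, $A_{22}$ is invertible, and they share no eigenvalues.

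Next, lift $E$ to $\mathcal{E}=E+X\epsilon$ with $X=\begin{pmatrix}0&X_{12}\\X_{21}&0\end{pmatrix}$ off-diagonal. By Remark~\ref{remark:off-diagonal} and Theorem~\ref{thm:idempotent}, $\mathcal{E}$ is then idempotent. Commutativity $\mathcal{E}\mathcal{M}=\mathcal{M}\mathcal{E}$ is equivalent to $AX-XA=EB-BE$, which splits into the two Sylvester equations
\[
A_{11}X_{12}-X_{12}A_{22}=B_{12},\qquad A_{22}X_{21}-X_{21}A_{11}=-B_{21}.
\]
Both are solvable by Lemma~\ref{lemma:Sylvester}, because the diagonal blocks have disjoint spectra. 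This is the only nontrivial step, and it was already carried out in the proof of Theorem~\ref{thm:strpireg}.

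Finally, set $\mathcal{U}=\mathcal{M}-\mathcal{E}=U+(B-X)\epsilon$. Since $\pi(\mathcal{U})=U$ is invertible, $\mathcal{U}$ is a unit by Theorem~\ref{thm:invertible}. Because $\mathcal{E}$ commutes with $\mathcal{M}$, it also commutes with $\mathcal{U}=\mathcal{M}-\mathcal{E}$. Hence $\mathcal{M}=\mathcal{E}+\mathcal{U}$ is a strongly clean decomposition, and since $\mathcal{M}$ was arbitrary, $M_n(\mathbb{D})$ is strongly clean.

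Alternatively, the result follows more abstractly: Theorem~\ref{thm:strpireg} shows $M_n(\mathbb{D})$ is strongly $\pi$-regular, and every strongly $\pi$-regular ring is strongly clean (Nicholson \cite{Nicholson1999}). I would mention this route as a remark but present the direct construction above as the proof. I expect no real obstacle, since the Sylvester solvability is inherited verbatim from Theorem~\ref{thm:strpireg}.
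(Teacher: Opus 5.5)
Your proof is correct and is essentially the paper's argument: the paper's proof simply cites the decomposition $\mathcal{M}=\mathcal{E}+\mathcal{U}$ with $\mathcal{E}\mathcal{M}=\mathcal{M}\mathcal{E}$ from the proof of Theorem~\ref{thm:strpireg}, which rests on the same Sylvester equations you re-derive in detail. The paper also notes your alternative route (strongly $\pi$-regular implies strongly clean), citing Burgess--Menal for it.
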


\begin{proof}
Let $\mathcal{M} = A + B\epsilon$ be any dual matrix. In the proof of 
Theorem~\ref{thm:strpireg}, $\mathcal{M}$ was decomposed as 
$\mathcal{M} = \mathcal{E} + \mathcal{U}$, where $\mathcal{E}$ is an idempotent, 
$\mathcal{U}$ is a unit, and $\mathcal{E}\mathcal{M} = \mathcal{M}\mathcal{E}$ (equivalently 
$\mathcal{E}\mathcal{U}= \mathcal{U}\mathcal{E}$). 
This is precisely a strongly clean decomposition of $\mathcal{M}$. Hence every 
element of $M_n(\mathbb{D})$ is strongly clean, so $M_n(\mathbb{D})$ is strongly 
clean.
\end{proof}

Note that we can also obtain the fact that Theorem \ref{thm:strpireg} implies Theorem \ref{theorem:stronglyclean}, since every strongly $\pi$-regular ring is strongly clean \cite[Proposition~2.6]{burgess88}.

\subsection{Nil-Clean and Strongly Nil-Clean Properties}

Recall that an element $a$ in a ring $R$ is called \textit{nil-clean} if it can be written as $a = e + n$, where $e$ is an idempotent and $n$ is a nilpotent element. A ring $R$ is called a \textit{nil-clean ring} if every element in $R$ is nil-clean. 
	
Furthermore, if these elements commute (i.e., $en = ne$, or equivalently $ea = ae$), the element $a$ is called \textit{strongly nil-clean}, and $R$ is said to be a \textit{strongly nil-clean ring} if all of its elements are strongly nil-clean.

\begin{theorem}\label{thm:nil-cleanChar}
    Let $\mathcal{M}=A+B\epsilon \in M_n(\mathbb{D})$. Then $\mathcal{M}$ is nil-clean if and only if $A$ is nil-clean in $M_n(\R)$.
\end{theorem}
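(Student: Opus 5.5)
We prove the two directions separately, using the projection $\pi$ of Lemma~\ref{lemma:proj} together with the characterizations of idempotents and nilpotents in Theorems~\ref{thm:idempotent} and~\ref{thm:nilpotent}.

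For ($\Rightarrow$), suppose $\mathcal{M}=\mathcal{E}+\mathcal{N}$ with $\mathcal{E}^2=\mathcal{E}$ and $\mathcal{N}$ nilpotent. Apply $\pi$: since $\pi$ is a ring homomorphism, $A=\pi(\mathcal{E})+\pi(\mathcal{N})$. Here $\pi(\mathcal{E})^2=\pi(\mathcal{E}^2)=\pi(\mathcal{E})$, so $\pi(\mathcal{E})$ is idempotent. Also $\pi(\mathcal{N})^k=\pi(\mathcal{N}^k)=0$ for some $k$, so $\pi(\mathcal{N})$ is nilpotent (this is exactly the forward direction of Theorem~\ref{thm:nilpotent}). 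Hence $A$ is nil-clean in $M_n(\R)$.

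For ($\Leftarrow$), suppose $A=E+N$ with $E^2=E$ and $N$ nilpotent in $M_n(\R)$. Mirroring the proof of Theorem~\ref{theorem:clean}, we put the entire dual part into the nilpotent summand:
$$\mathcal{E}=E, \qquad \mathcal{N}=N+B\epsilon .$$
Then $\mathcal{E}$ is idempotent, since conditions $(i)$ and $(ii)$ of Theorem~\ref{thm:idempotent} hold trivially with dual part $0$. The real part of $\mathcal{N}$ is $N$, which is nilpotent, so $\mathcal{N}$ is nilpotent by Theorem~\ref{thm:nilpotent}. Since $\mathcal{E}+\mathcal{N}=E+N+B\epsilon=\mathcal{M}$, this is a nil-clean decomposition of $\mathcal{M}$.

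There is no real obstacle here: unlike the strongly clean case, no commutativity is required, so we need no Sylvester-type correction to the idempotent. The only nontrivial input is that nilpotency depends only on the real part, which Theorem~\ref{thm:nilpotent} already supplies.
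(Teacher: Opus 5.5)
Your proof is correct and is essentially the paper's argument: for ($\Rightarrow$) you project a nil-clean decomposition of $\mathcal{M}$ via $\pi$, and for ($\Leftarrow$) you take $\mathcal{E}=E$ and $\mathcal{N}=N+B\epsilon$, with Theorem~\ref{thm:nilpotent} giving nilpotency of $\mathcal{N}$. Your remark that no commutativity, and hence no Sylvester correction, is needed here also matches the paper's treatment.
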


\begin{proof}
    ($\Rightarrow$) Assume $\mathcal{M}=\mathcal{E}+\mathcal{N}$ is a nil-clean decomposition. By Lemma \ref{lemma:proj}, applying $\pi$, we get $A=\pi(\mathcal{E})+\pi(\mathcal{N})$. Since homomorphisms preserve algebraic properties, $\pi(\mathcal{E})$ is idempotent and $\pi(\mathcal{N})$ is nilpotent. Thus $A$ is nil-clean.

    ($\Leftarrow$) Assume $A$ is nil-clean, i.e., $A=E+N$. Then
    $$ \mathcal{M}=E+(N+B\epsilon) $$
    Let $\mathcal{E}=E$ and $\mathcal{N}=N+B\epsilon$. Clearly, $\mathcal{E}$ is idempotent. For $\mathcal{N}$, since $N$ is nilpotent, by Theorem \ref{thm:nilpotent}, $N+B\epsilon$ is also nilpotent.
    Thus, $\mathcal{M}=\mathcal{E}+\mathcal{N}$ is a nil-clean decomposition.
\end{proof}

\begin{proposition}
  The matrix ring over dual numbers $M_n(\mathbb{D})$ is not nil-clean.
\end{proposition}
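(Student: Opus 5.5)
By Theorem \ref{thm:nil-cleanChar}, a dual matrix $\mathcal{M}=A+B\epsilon$ is nil-clean if and only if its real part $A$ is nil-clean in $M_n(\R)$. It therefore suffices to exhibit a single real matrix $A\in M_n(\R)$ that is not nil-clean. Then $\mathcal{M}=A$ (with $B=0$) is not nil-clean in $M_n(\mathbb{D})$, so $M_n(\mathbb{D})$ is not a nil-clean ring.

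The candidate is $A=2I_n$, or more generally any scalar $\lambda I_n$ with $\lambda\notin\{0,1\}$. Suppose $2I_n=E+N$ with $E^2=E$ and $N$ nilpotent. Then $E=2I_n-N$. Since $N$ is nilpotent, its only eigenvalue is $0$, and $2I_n$ commutes with $N$. Hence every eigenvalue of $2I_n-N$ equals $2$; equivalently, $2I_n-N=2(I_n-\tfrac12 N)$ is invertible with characteristic polynomial $(x-2)^n$. On the other hand, the idempotent $E$ has all eigenvalues in $\{0,1\}$ (it is diagonalizable with such eigenvalues, as noted in Remark \ref{remark:off-diagonal}). This is a contradiction, since $n\ge 1$. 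A trace argument works equally well: $\operatorname{tr}E=\operatorname{rank}E\le n$, while $\operatorname{tr}(2I_n-N)=2n$, because a nilpotent matrix has trace $0$. So $2n\le n$, which is impossible.

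The step needing care is the justification that $2I_n-N$ has only the eigenvalue $2$ (or zero trace for $N$). This rests on standard facts about nilpotent matrices, which I will cite briefly. The rest is the direct reduction via Theorem \ref{thm:nil-cleanChar}. I also note that nil-cleanness is preserved by the surjective homomorphism $\pi$ of Lemma \ref{lemma:proj}, which gives the reduction even without that theorem.
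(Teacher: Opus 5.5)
Your proof is correct and follows the paper's route. Like the paper, you reduce via Theorem~\ref{thm:nil-cleanChar} to the real matrix $2I_n$ and get a contradiction by comparing the eigenvalues of an idempotent with those of a nilpotent; the paper phrases it as $N=2I-E$ having eigenvalues in $\{1,2\}$, whereas you phrase it as $E=2I-N$ having only the eigenvalue $2$, and your trace variant is an equally valid alternative.
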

\begin{proof} Assume to the contrary $M_n(\mathbb{D})$ is nil-clean. Then by Theorem \ref{thm:nil-cleanChar} every real matrix $A\in M_n(\R)$ would be nil-clean. Consider $A= 2I_n$. Since $A$ is nil-clean, there exists an idempotent matrix $E$ and nilpotent matrix $N$ such that $A=E+N$. Then $$N=2I-E.$$ The eigenvalues of the idempotent $E$ are $\lambda_E \in \{0, 1\}$. Then the eigenvalues of $N$ are $\lambda_N \in \{2, 1\}$. However, a nonzero nilpotent matrix must have all zero eigenvalues which contradicts what we have obtained.

\end{proof}

\begin{proposition}
 If $\mathbb{D}={\mathbb{F}_2 [\epsilon]} / {\langle {\epsilon}^2 \rangle}$, then $M_n(\mathbb{D})$ is nil-clean.
\end{proposition}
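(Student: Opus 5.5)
The plan is to reduce the statement to the known fact that $M_n(\mathbb{F}_2)$ is nil-clean, and then lift along the projection, exactly as in Theorem~\ref{thm:nil-cleanChar}.

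First, I will observe that the results of Section~2 used here hold verbatim over any commutative base field $F$ in place of $\R$. This applies to the projection $\pi: M_n(F[\epsilon]/\langle\epsilon^2\rangle)\to M_n(F)$, $A+B\epsilon\mapsto A$, being a surjective ring homomorphism (Lemma~\ref{lemma:proj}). It also applies to the characterization of nilpotents in Theorem~\ref{thm:nilpotent}. The proofs only use $\epsilon^2=0$ and the binomial-type expansion $(A+B\epsilon)^{m}=A^{m}+\bigl(\sum_{j=0}^{m-1}A^jBA^{m-1-j}\bigr)\epsilon$, with no reference to $\R$. Consequently, the ($\Leftarrow$) direction of Theorem~\ref{thm:nil-cleanChar} carries over unchanged. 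If $A=E+N$ with $E^2=E$ and $N$ nilpotent in $M_n(\mathbb{F}_2)$, then $\mathcal{M}=A+B\epsilon=E+(N+B\epsilon)$. Here $E$ is an idempotent of $M_n(\mathbb{D})$, and $N+B\epsilon$ is nilpotent by Theorem~\ref{thm:nilpotent}, since its real part $N$ is nilpotent.

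Second, I will supply the real-part input: every matrix in $M_n(\mathbb{F}_2)$ is nil-clean. This is the result of Breaz, C\u{a}lug\u{a}reanu, Danchev and Micu that $M_n(F)$ is nil-clean for $F=\mathbb{F}_2$. I intend to cite it rather than reprove it, and it is the main obstacle if one wants a self-contained argument. In that case I would proceed as follows. Using the rational canonical form over $\mathbb{F}_2$, reduce to a single companion matrix, since similarity and block-diagonal sums preserve nil-cleanness. For a companion matrix $C$ of size $m\ge 2$, choose an explicit idempotent $E$, for instance a suitable diagonal $0/1$ matrix, so that $C-E$ is strictly triangular up to a permutation, or is a companion matrix of $x^m$. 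This works because over $\mathbb{F}_2$ the trace obstruction seen for $2I_n$ over $\R$ disappears, as $2=0$. The $1\times1$ case is trivial because $\mathbb{F}_2=\{0,1\}$ consists of idempotents.

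Combining the two steps, every $\mathcal{M}\in M_n(\mathbb{D})$ is nil-clean, so $M_n(\mathbb{D})$ is nil-clean. Alternatively, one can note that $\epsilon M_n(\mathbb{D})$ is a nil ideal with quotient $M_n(\mathbb{F}_2)$, and that nil-cleanness lifts modulo nil ideals because idempotents lift. The direct decomposition above avoids needing that general lemma.
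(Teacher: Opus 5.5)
Your proof is correct and is essentially the paper's: cite the theorem of Breaz et al.\ that $M_n(\mathbb{F}_2)$ is nil-clean, then lift $A=E+N$ to $\mathcal{M}=E+(N+B\epsilon)$. Your remark that Lemma~\ref{lemma:proj} and Theorem~\ref{thm:nilpotent} hold verbatim over $\mathbb{F}_2$ usefully makes explicit what the paper leaves implicit. Your optional self-contained sketch, however, would need repair, because a diagonal $0/1$ idempotent does not always work: for the cyclic permutation matrix $C$ (the companion matrix of $x^3+1$) one gets $\det\bigl(xI-(C-D)\bigr)=(x+d_1)(x+d_2)(x+d_3)+1\neq x^3$ for every diagonal $D$, since $x^3+1=(x+1)(x^2+x+1)$ does not split over $\mathbb{F}_2$, so a non-diagonal idempotent is required there.
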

\begin{proof} By \cite[Theorem~3]{breaz2013} we know that  the matrix ring $M_n(\mathbb{F}_2)$ is nil-clean. In this case, by Theorem \ref{thm:nil-cleanChar} every $\mathcal{M}=A+B\epsilon$ in $M_n(\mathbb{D})$ is nil-clean, by setting $\mathcal{E}=E$ and $\mathcal{N}=N+B\epsilon$.
\end{proof}

\begin{theorem} Let $\mathcal{M}=A+B\epsilon \in M_n(\mathbb{D})$. Then $\mathcal{M}$ is strongly nil-clean if and only if $A$ is strongly nil-clean in $M_n(\R)$.
\end{theorem}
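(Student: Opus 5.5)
The forward direction is a projection argument. Suppose $\mathcal{M}=\mathcal{E}+\mathcal{N}$ with $\mathcal{E}^2=\mathcal{E}$, $\mathcal{N}$ nilpotent and $\mathcal{E}\mathcal{N}=\mathcal{N}\mathcal{E}$. Apply the real projection $\pi$ of Lemma~\ref{lemma:proj}. This gives $A=\pi(\mathcal{E})+\pi(\mathcal{N})$, where $\pi(\mathcal{E})$ is idempotent, $\pi(\mathcal{N})$ is nilpotent, and the two still commute because $\pi$ is multiplicative. Hence $A$ is strongly nil-clean.

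For the converse, the naive lift $\mathcal{E}=E$, $\mathcal{N}=N+B\epsilon$ used in Theorem~\ref{thm:nil-cleanChar} does not commute in general, so I would instead copy the Sylvester-equation construction from Theorem~\ref{thm:strpireg}. Let $A=E+N$ with $E^2=E$, $N$ nilpotent and $EN=NE$. Then $EA=AE$, so in a basis adapted to $\R^n=\Ima(E)\oplus\ker(E)$ we have
$$E=\begin{pmatrix} I_r&0\\0&0\end{pmatrix},\qquad A=\begin{pmatrix} A_{11}&0\\0&A_{22}\end{pmatrix},$$
with $A_{11}=I_r+N_{11}$ and $A_{22}=N_{22}$, where $N_{11}$ and $N_{22}$ are nilpotent. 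Thus the only eigenvalue of $A_{11}$ is $1$ and the only eigenvalue of $A_{22}$ is $0$, so the two blocks share no eigenvalue.

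Now look for $\mathcal{E}=E+X\epsilon$ with $X=\begin{pmatrix}0&X_{12}\\X_{21}&0\end{pmatrix}$. By Remark~\ref{remark:off-diagonal} and Theorem~\ref{thm:idempotent}, any such $\mathcal{E}$ is idempotent. Exactly as in the proof of Theorem~\ref{thm:strpireg}, the condition $\mathcal{E}\mathcal{M}=\mathcal{M}\mathcal{E}$ reduces to $AX-XA=EB-BE$. In blocks this is the pair of equations
$$A_{11}X_{12}-X_{12}A_{22}=B_{12},\qquad A_{22}X_{21}-X_{21}A_{11}=-B_{21}.$$
Both are uniquely solvable by Lemma~\ref{lemma:Sylvester}, since $A_{11}$ and $A_{22}$ have no common eigenvalue.

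Finally set $\mathcal{N}=\mathcal{M}-\mathcal{E}=N+(B-X)\epsilon$. Its real part $N$ is nilpotent, so $\mathcal{N}$ is nilpotent by Theorem~\ref{thm:nilpotent}. Since $\mathcal{E}$ commutes with $\mathcal{M}$, it also commutes with $\mathcal{N}=\mathcal{M}-\mathcal{E}$. This gives the required strongly nil-clean decomposition.

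The main obstacle is the eigenvalue separation that makes the Sylvester equations solvable. It rests on two facts: commuting $E$ and $N$ forces $N$ to be block diagonal in the adapted basis, and the diagonal blocks of a nilpotent matrix are nilpotent. I would state both explicitly.
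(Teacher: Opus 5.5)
Your proposal is correct and follows the paper's proof essentially step for step. The forward direction projects via $\pi$. The converse lifts $E$ to $\mathcal{E}=E+X\epsilon$ with off-diagonal $X$, found from the two Sylvester equations, which are solvable because $A_{11}=I_r+N_{11}$ and $A_{22}=N_{22}$ have spectra $\{1\}$ and $\{0\}$; nilpotency of $\mathcal{N}=N+(B-X)\epsilon$ then comes from Theorem~\ref{thm:nilpotent}.

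When you write out the second of your two facts explicitly, state it for block-diagonal nilpotent matrices, where $N^k=\mathrm{diag}(N_{11}^k,N_{22}^k)$ makes it immediate. For a general nilpotent matrix the diagonal blocks need not be nilpotent.
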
 
\begin{proof}

    ($\Rightarrow$)  Assume $\mathcal{M} = \mathcal{E} + \mathcal{N}$ is a strongly 
nil-clean decomposition, that is, $\mathcal{E}$ is an idempotent, $\mathcal{N}$ is 
nilpotent, and $\mathcal{E}\mathcal{N} = \mathcal{N}\mathcal{E}$. By 
Lemma~\ref{lemma:proj}, applying $\pi$ we get 
$A = \pi(\mathcal{E}) + \pi(\mathcal{N}) = E + N$, and 
$\pi(\mathcal{E}\mathcal{N}) = \pi(\mathcal{N}\mathcal{E})$ implies $EN = NE$. 
Since $\pi$ is a ring homomorphism, $E$ is an idempotent and $N$ is nilpotent in 
$M_n(\R)$. Thus $A$ is strongly nil-clean.

($\Leftarrow$) Assume $A$ is strongly nil-clean, i.e., $A = E + N$, where 
$E^2 = E$, $N$ is nilpotent, and $EN = NE$ (equivalently $EA = AE$).

Let $\mathcal{E} = E + X\epsilon$. Recall Theorem~\ref{thm:idempotent}: 
$\mathcal{E}$ is idempotent if and only if $E^2 = E$ and
\begin{equation}\label{eq:X-nilclean}
X = EX + XE.
\end{equation}
By Remark~\ref{remark:off-diagonal}, $X$ is off-diagonal $X = \begin{pmatrix} 0 & X_{12} \\ X_{21} & 0 \end{pmatrix}$ in a basis where 
$E = \begin{pmatrix} I_r & 0 \\ 0 & 0 \end{pmatrix}$.

For commutativity, as in the proof of Theorem~\ref{thm:strpireg}, 
$\mathcal{E}\mathcal{M} = \mathcal{M}\mathcal{E}$ (equivalently 
$\mathcal{E}\mathcal{N} = \mathcal{N}\mathcal{E}$) holds if and only if
\begin{equation}\label{eq:AX-XA=EB-BE}
AX - XA = EB - BE.
\end{equation}
We need to show that there exists $X$ satisfying 
Equations~\eqref{eq:X-nilclean} and~\eqref{eq:AX-XA=EB-BE}.

Since $AE = EA$ and $N = A - E$, the matrices $A$ and $N$ are block diagonal in 
the chosen basis:
$$A = \begin{pmatrix} A_{11} & 0 \\ 0 & A_{22} \end{pmatrix}, \qquad 
N = \begin{pmatrix} N_{11} & 0 \\ 0 & N_{22} \end{pmatrix},$$
where $A_{11} = I_r + N_{11}$ and $A_{22} = N_{22}$. Since $N$ is nilpotent, 
$N_{11}$ and $N_{22}$ are nilpotent, so their only eigenvalue is zero. Hence the 
only eigenvalue of $A_{11}$ is $1$, while the only eigenvalue of $A_{22}$ is $0$. Thus $A_{11}$ and $A_{22}$ have no eigenvalues in common.

Let $B = \begin{pmatrix} B_{11} & B_{12} \\ B_{21} & B_{22} \end{pmatrix}$. 
Substituting $E$, $A$, $B$, and $X$ into Equation~\eqref{eq:AX-XA=EB-BE}, we obtain
\begin{align}
A_{11}X_{12} - X_{12}A_{22} &= B_{12}, \label{eq:sylvester3}\\
A_{22}X_{21} - X_{21}A_{11} &= -B_{21}. \label{eq:sylvester4}
\end{align}
By Lemma~\ref{lemma:Sylvester}, Equations~\eqref{eq:sylvester3} 
and~\eqref{eq:sylvester4} yield unique matrix solutions $X_{12}$ and $X_{21}$. 
Since this solution $X$ is off-diagonal in the chosen basis, we have 
$EX + XE = X$, so Equation~\eqref{eq:X-nilclean} holds. Hence 
$\mathcal{E} = E + X\epsilon$ is idempotent by Theorem~\ref{thm:idempotent}, and 
$\mathcal{E}\mathcal{M} = \mathcal{M}\mathcal{E}$ by 
Equation~\eqref{eq:AX-XA=EB-BE}.

Set $\mathcal{N} = \mathcal{M} - \mathcal{E} = N + (B - X)\epsilon$. Since $N$ is 
nilpotent, $\mathcal{N}$ is nilpotent in $M_n(\mathbb{D})$ by 
Theorem~\ref{thm:nilpotent}.

Consequently, $\mathcal{M} = \mathcal{E} + \mathcal{N}$, where $\mathcal{E}$ is an 
idempotent, $\mathcal{N}$ is nilpotent, and 
$\mathcal{E}\mathcal{M} = \mathcal{M}\mathcal{E}$. Hence $\mathcal{M}$ is strongly 
nil-clean.
\end{proof}
\begin{proposition}
  The matrix ring over dual numbers $M_n(\mathbb{D})$ is not strongly nil-clean.
\end{proposition}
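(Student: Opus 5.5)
The plan is to argue by contradiction, exactly as for the non-nil-clean case. Every strongly nil-clean ring is in particular nil-clean, since a strongly nil-clean decomposition is a nil-clean decomposition. So once we know $M_n(\mathbb{D})$ is not nil-clean, it follows at once that it is not strongly nil-clean.

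For a self-contained argument in the style of the paper, we would instead take $\mathcal{M}=2I_n$, that is, $A=2I_n$ and $B=0$. Suppose $\mathcal{M}$ were strongly nil-clean. By the preceding theorem characterizing strongly nil-clean dual matrices, its real part $A=2I_n$ would then be strongly nil-clean in $M_n(\R)$. Alternatively, we can apply $\pi$ directly through Lemma \ref{lemma:proj}: this gives $2I_n=E+N$ with $E^2=E$, $N$ nilpotent, and $EN=NE$.

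It remains to rule this out. Write $N=2I_n-E$. Since $E$ is diagonalizable with eigenvalues in $\{0,1\}$, the eigenvalues of $N$ lie in $\{2,1\}$. A nilpotent matrix has only the eigenvalue $0$, so this is a contradiction. More directly, $2I_n-E$ is invertible with inverse $\tfrac12(I_n+E)$, because
\[
(2I_n-E)\cdot\tfrac12(I_n+E)=\tfrac12\,(2I_n+2E-E-E^2)=I_n .
\]
A unit cannot be nilpotent in the nonzero ring $M_n(\R)$, so again we reach a contradiction.

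Hence $2I_n\in M_n(\mathbb{D})$ is not strongly nil-clean, and $M_n(\mathbb{D})$ is not strongly nil-clean. There is no real obstacle here. The only point needing care is that the argument uses the real field, where $2$ is invertible. It would fail over $\mathbb{F}_2$, which is consistent with the earlier proposition.
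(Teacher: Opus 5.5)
Your proposal is correct and matches the paper, whose proof likewise just observes that every strongly nil-clean ring is nil-clean and invokes the preceding result that $M_n(\mathbb{D})$ is not nil-clean. Your self-contained version reuses the paper's counterexample $2I_n$ from that nil-clean argument, and the explicit inverse $\tfrac12(I_n+E)$ of $2I_n-E$ is a tidy substitute for the eigenvalue reasoning.
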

\begin{proof} Clearly, every strongly nil-clean ring is nil-clean. Since $M_n(\mathbb{D})$ is not nil-clean, it is not strongly nil-clean.
\end{proof}
\subsection{$r$-Clean and Strongly $r$-Clean Properties}
Recall that an element $a$ in a ring is called \textit{r-clean} (or \textit{regular-clean}) if 	$a=r+e$	where $r$ is von Neumann regular and $e$ is idempotent \cite{Ashrafi2011rcleanR}.

\begin{corollary}
	The matrix ring over dual numbers $M_n(\mathbb{D})$ is an $r$-clean ring.
\end{corollary}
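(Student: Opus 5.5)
Since the statement is a corollary, I will derive it from the clean decomposition already established in Theorem~\ref{theorem:clean}, using the standard fact that every unit is von Neumann regular. Let $\mathcal{M}=A+B\epsilon\in M_n(\mathbb{D})$ be arbitrary. By Theorem~\ref{theorem:clean}, we may write $\mathcal{M}=\mathcal{E}+\mathcal{U}$, with $\mathcal{E}$ idempotent and $\mathcal{U}$ a unit in $M_n(\mathbb{D})$. Concretely, we may take $\mathcal{E}=E$ and $\mathcal{U}=U+B\epsilon$, where $A=E+U$ is a clean decomposition in $M_n(\R)$.

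The only point to check is that $\mathcal{U}$ is regular. Setting $x=\mathcal{U}^{-1}$, which exists by Theorem~\ref{thm:invertible}, we get
\[
\mathcal{U}x\,\mathcal{U}=\mathcal{U}\mathcal{U}^{-1}\mathcal{U}=\mathcal{U}.
\]
Hence $\mathcal{U}\in \mathcal{U}M_n(\mathbb{D})\,\mathcal{U}$, so $\mathcal{U}$ is von Neumann regular.

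It follows that $\mathcal{M}=\mathcal{U}+\mathcal{E}$ expresses $\mathcal{M}$ as a sum of a regular element and an idempotent, which is exactly an $r$-clean decomposition. Since $\mathcal{M}$ was arbitrary, $M_n(\mathbb{D})$ is an $r$-clean ring. More generally, this shows that every clean ring is $r$-clean.

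There is no real obstacle in this argument; the only care needed is to apply regularity to the unit summand. One should not try to regularize $\mathcal{M}$ itself, since $M_n(\mathbb{D})$ is not regular by Theorem~\ref{thm:notregular}. If desired, the argument can be strengthened to a strongly $r$-clean decomposition. For this, use Theorem~\ref{theorem:stronglyclean} instead: there $\mathcal{E}$ commutes with $\mathcal{U}$, and the same regularity argument applies to $\mathcal{U}$ unchanged.
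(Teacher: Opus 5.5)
Your proof is correct and matches the paper's argument: take the clean decomposition from Theorem~\ref{theorem:clean} and note that every unit is von Neumann regular, so every clean ring is $r$-clean. Your closing remark about obtaining a strongly $r$-clean decomposition from Theorem~\ref{theorem:stronglyclean} is also exactly how the paper proves the next corollary.
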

\begin{proof}  
By Theorem \ref{theorem:clean}, $M_n(\mathbb{D})$ is clean. Since every unit is von Neumann regular, it is immediate that any clean ring is $r$-clean. Thus $M_n(\mathbb{D})$ is an $r$-clean ring.
\end{proof}
Recall from the Introduction that an element $a$ in a ring is called \textit{strongly r-clean} if $a=r+e$	and $re=er$, where $r$ is von Neumann regular and $e$ is idempotent \cite{sharma2018stronglyrclean}.

\begin{corollary}
	The matrix ring over dual numbers $M_n(\mathbb{D})$ is a strongly $r$-clean ring.
\end{corollary}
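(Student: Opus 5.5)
The plan is to derive this from Theorem~\ref{theorem:stronglyclean}, in the same way the preceding corollary derived $r$-cleanness from cleanness. Take an arbitrary $\mathcal{M}\in M_n(\mathbb{D})$. By Theorem~\ref{theorem:stronglyclean}, and concretely by the construction in the proof of Theorem~\ref{thm:strpireg}, we can write $\mathcal{M}=\mathcal{E}+\mathcal{U}$. Here $\mathcal{E}=E+X\epsilon$ is an idempotent obtained by solving the Sylvester equations, and $\mathcal{U}=U+(B-X)\epsilon$ is a unit with $\mathcal{E}\mathcal{U}=\mathcal{U}\mathcal{E}$.

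The definition of strongly $r$-clean requires $\mathcal{M}=r+e$ with $r$ regular, $e$ idempotent and $re=er$. So the second step is to set $r=\mathcal{U}$ and $e=\mathcal{E}$, and to check that a unit is von Neumann regular. This is immediate: $\mathcal{U}\,\mathcal{U}^{-1}\,\mathcal{U}=\mathcal{U}$, where $\mathcal{U}^{-1}$ exists by Theorem~\ref{thm:invertible} and can be written explicitly as $U^{-1}-U^{-1}(B-X)U^{-1}\epsilon$. Commutativity $r e=e r$ is exactly the commutation $\mathcal{E}\mathcal{U}=\mathcal{U}\mathcal{E}$ already established. Hence every element is strongly $r$-clean, and $M_n(\mathbb{D})$ is a strongly $r$-clean ring. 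This amounts to the general observation that every strongly clean ring is strongly $r$-clean.

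There is essentially no obstacle. The only point worth stating carefully is that the commutation in the strongly clean decomposition refers to the same pair of summands, the idempotent and the unit, that serve as $e$ and $r$. This holds because $\mathcal{E}\mathcal{M}=\mathcal{M}\mathcal{E}$ implies $\mathcal{E}\mathcal{U}=\mathcal{E}\mathcal{M}-\mathcal{E}=\mathcal{M}\mathcal{E}-\mathcal{E}=\mathcal{U}\mathcal{E}$. I will state this equivalence explicitly so the reduction to Theorem~\ref{theorem:stronglyclean} is airtight.
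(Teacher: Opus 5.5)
Your proposal is correct and follows the paper's proof: it deduces the result from Theorem~\ref{theorem:stronglyclean} using the fact that every unit is von Neumann regular, so the strongly clean decomposition $\mathcal{M}=\mathcal{E}+\mathcal{U}$ is already a strongly $r$-clean one. Your explicit inverse of $\mathcal{U}$ and your check that $\mathcal{E}\mathcal{M}=\mathcal{M}\mathcal{E}$ implies $\mathcal{E}\mathcal{U}=\mathcal{U}\mathcal{E}$ are harmless details that the paper leaves implicit.
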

\begin{proof}
By Theorem~\ref{theorem:stronglyclean}, $M_n(\mathbb{D})$ is strongly clean. Since 
every unit is von Neumann regular, every strongly clean element is strongly 
$r$-clean. Thus $M_n(\mathbb{D})$ is a strongly $r$-clean ring.
\end{proof}
\subsection{The Nil Regular ($\mathrm{NR}$)-Clean Property}
An element $a$ in a ring is called \textit{$\mathrm{NR}$-clean} if it can be written as the sum of a regular element and a nilpotent
\[a=r+n.\]  Furthermore, if these elements commute  $rn = nr$,  the element $a$ is called \textit{strongly $\mathrm{NR}$-clean}, and the ring $R$ is said to be a \textit{strongly $\mathrm{NR}$-clean ring} if all of its elements are strongly $\mathrm{NR}$-clean \cite{Khashan2016}, \cite{Ibraheem2021}.

\begin{theorem}
	The matrix ring over dual numbers $M_n(\mathbb{D})$ is an $\mathrm{NR}$-clean ring.
\end{theorem}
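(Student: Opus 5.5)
The plan is to avoid any new computation and obtain the $\mathrm{NR}$-clean decomposition directly from a result already proved. The most direct route goes through Theorem~\ref{theorem:clean}, or equivalently through the strongly $\pi$-regular decomposition of Theorem~\ref{thm:strpireg}. Given $\mathcal{M}=A+B\epsilon$, I will write $\mathcal{M}$ as a regular element plus a nilpotent.

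The simplest choice is the one closest to the proof of Theorem~\ref{thm:nil-cleanChar}. Since $M_n(\R)$ is von Neumann regular, the real part $A$ is regular. I would then take $\mathcal{R}=A$, viewed as a constant dual matrix, and $\mathcal{N}=B\epsilon$. By Proposition~\ref{prop:regularCondition} with dual part $0$, the compatibility condition $(I-AA^{-})0(I-A^{-}A)=0$ holds trivially, so $\mathcal{R}$ is regular in $M_n(\mathbb{D})$. Alternatively, $\mathcal{R}=A$ satisfies $A=AA^{-}A$ inside $M_n(\mathbb{D})$ directly. For the other summand, $\mathcal{N}=0+B\epsilon$ has real part $0$, so it is nilpotent by Theorem~\ref{thm:nilpotent}; indeed $(B\epsilon)^2=0$. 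Hence $\mathcal{M}=\mathcal{R}+\mathcal{N}$ is an $\mathrm{NR}$-clean decomposition. Since $\mathcal{M}$ was arbitrary, this proves the theorem.

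A second, equally short route is available as a backup: every unit is regular and $0$ is nilpotent, so the clean decomposition already works. More usefully, write $\mathcal{M}=\mathcal{M}+0$ only when $\mathcal{M}$ is regular; the generic case is handled by the first route. The first route is preferable because it isolates exactly why regularity fails in $M_n(\mathbb{D})$: the obstruction in Proposition~\ref{prop:regularCondition} lives entirely in the $\epsilon$-part, and that part is absorbed into the nilpotent summand.

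There is no real obstacle here. The only point needing care is confirming that a real regular matrix remains regular when viewed in $M_n(\mathbb{D})$. This follows because the embedding $M_n(\R)\hookrightarrow M_n(\mathbb{D})$ is a ring homomorphism, so $A=AA^{-}A$ persists. For a later strongly $\mathrm{NR}$-clean result, one notes that $A$ and $B\epsilon$ need not commute, so that case would instead use the commuting decomposition from Theorem~\ref{thm:strpireg}.
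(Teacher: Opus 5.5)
Your main argument, taking $\mathcal{R}=A$ (which remains regular in $M_n(\mathbb{D})$ because $A=AA^{-}A$ persists under the inclusion $M_n(\R)\hookrightarrow M_n(\mathbb{D})$) and $\mathcal{N}=B\epsilon$ with $(B\epsilon)^2=0$, is correct and is exactly the paper's proof. You should drop the ``backup'' remark, because a clean decomposition is an idempotent plus a unit rather than a regular element plus a nilpotent, so Theorem~\ref{theorem:clean} does not by itself give an $\mathrm{NR}$-clean decomposition (the strongly $\pi$-regular decomposition does, via $\mathcal{M}=\mathcal{M}(I-\mathcal{E})+\mathcal{M}\mathcal{E}$); your main argument does not depend on that remark.
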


\begin{proof}
Let $\mathcal{M}=A+B\epsilon$ be any dual matrix. Since $M_n(\R)$ is a regular ring, the real part $A$ is also regular in $M_n(\mathbb{D})$. Clearly, $(B\epsilon)^2=0$, so $B\epsilon$ is nilpotent. Hence, every element $\mathcal{M} \in M_n(\mathbb{D})$ can be decomposed as the sum of a regular element and a nilpotent element.
\end{proof}

\begin{proposition}\label{prop:uniquelyNR} The matrix ring over dual numbers $M_n(\mathbb{D})$ is not uniquely $\mathrm{NR}$-clean. 
\end{proposition}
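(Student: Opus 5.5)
To prove that $M_n(\mathbb{D})$ is not uniquely $\mathrm{NR}$-clean, it suffices to exhibit one dual matrix with two different decompositions $\mathcal{M}=\mathcal{R}_1+\mathcal{N}_1=\mathcal{R}_2+\mathcal{N}_2$, where each $\mathcal{R}_i$ is regular and each $\mathcal{N}_i$ is nilpotent, with $\mathcal{R}_1\neq\mathcal{R}_2$. Here "uniquely $\mathrm{NR}$-clean" is read in the sense of \cite{Khashan2016}: every element has exactly one such decomposition. I will pick an element that is itself regular and also has a second splitting, and use Proposition~\ref{prop:regularCondition} and Theorem~\ref{thm:nilpotent} to verify each piece.

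Take $\mathcal{M}=I_n$. The first decomposition is $I_n=I_n+0$. Here $I_n$ is a unit, hence regular by Theorem~\ref{thm:invertible}, and $0$ is nilpotent. For a second decomposition, take any nonzero real $B$ and write
\[
I_n=(I_n-B\epsilon)+B\epsilon .
\]
The term $B\epsilon$ is nilpotent, since $(B\epsilon)^2=0$; alternatively, its real part is $0$, so Theorem~\ref{thm:nilpotent} applies. The term $I_n-B\epsilon$ has invertible real part $I_n$, so it is a unit by Theorem~\ref{thm:invertible}, and every unit is regular. Since $B\neq 0$, the two regular summands $I_n$ and $I_n-B\epsilon$ differ, so the decomposition is not unique. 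This already works for $n=1$, because $\epsilon\neq 0$ in $\mathbb{D}$.

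The second decomposition also commutes when $B$ is chosen to commute with the real parts, for example $B=I_n$: then $(I_n-\epsilon)\epsilon=\epsilon(I_n-\epsilon)$. So the example shows non-uniqueness even for strongly $\mathrm{NR}$-clean decompositions.

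The only step needing care is the definition. If the authors' notion of uniqueness instead fixes the nilpotent part, the same example still works, since $\mathcal{N}_1=0\neq B\epsilon=\mathcal{N}_2$. If it requires non-uniqueness for a non-unit element, I would use $\mathcal{M}=0$ instead: $0=0+0=(-B\epsilon)+B\epsilon$. This requires checking that $-B\epsilon$ is regular, but by the proof of Theorem~\ref{thm:notregular} it is not. So the safer alternative is $\mathcal{M}=E$ with $E=\mathrm{diag}(1,0,\dots,0)$ (for $n\ge 2$), using $E=E+0=(E-N)+N$ with $N$ a real nilpotent chosen so that $E-N$ is regular. Any real matrix $E-N$ is regular as an element of $M_n(\mathbb{D})$, because the compatibility condition of Proposition~\ref{prop:regularCondition} holds trivially when the dual part is zero. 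I expect this to be routine, and the identity example is the one I would present.
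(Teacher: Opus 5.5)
Your proof is correct and uses the same mechanism as the paper, which takes $\mathcal{M}=I+B\epsilon$ and compares $(I+B\epsilon)+0$ with $I+B\epsilon$: since adding $B\epsilon$ to a unit gives a unit (Theorem~\ref{thm:invertible}), the nilpotent $B\epsilon\neq 0$ can be moved into or out of the regular part, exactly as in your $I=I+0=(I-B\epsilon)+B\epsilon$. As a small aside, $(I-B\epsilon)B\epsilon=B\epsilon=B\epsilon(I-B\epsilon)$ holds for every $B$ because $\epsilon^2=0$, so your remark about the decomposition commuting needs no restriction on $B$.
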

\begin{proof} Take $ \mathcal{M}=I+B\epsilon$ with $B\neq 0$. Consider 
$ \mathcal{M}=(I+B\epsilon)+0$, where  $I+B\epsilon$ is regular with an inner inverse $I-B\epsilon$, and clearly $0$ is nilpotent. On the other hand, 
$ \mathcal{M}=I+B\epsilon$, where $I$ is regular, and $B\epsilon$ is nilpotent. These two decompositions have distinct regular parts, so $M_n(\mathbb{D})$ is not uniquely $\mathrm{NR}$-clean.
\end{proof}
\begin{remark} By \cite[Theorem~4]{Khashan2016} a ring $R$ is a uniquely $\mathrm{NR}$-clean ring if and only if $R$ is a strongly regular ring. Since $M_n(\mathbb{D})$ is not strongly regular by Corollary \ref{corollary:notstrregular}, this provides an alternative proof that $M_n(\mathbb{D})$ is not uniquely $\mathrm{NR}$-clean. 
\end{remark}
\begin{theorem} The matrix ring over dual numbers $M_n(\mathbb{D})$ is a strongly $\mathrm{NR}$-clean ring.
\end{theorem}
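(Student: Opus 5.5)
The plan is to use the strongly clean decomposition already built in the proof of Theorem~\ref{thm:strpireg}, so that no new construction is needed. Let $\mathcal{M}=A+B\epsilon$ be an arbitrary dual matrix. By Theorem~\ref{theorem:stronglyclean} (more precisely, by the construction in the proof of Theorem~\ref{thm:strpireg}), we may write $\mathcal{M}=\mathcal{E}+\mathcal{U}$. Here $\mathcal{E}$ is idempotent, $\mathcal{U}$ is a unit, and $\mathcal{E}\mathcal{U}=\mathcal{U}\mathcal{E}$.

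A strongly clean decomposition does not directly give a regular element plus a nilpotent, because an idempotent need not be nilpotent. Two routes are available.

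\textbf{First route (trivial decomposition).} Write $\mathcal{M}=\mathcal{M}+0$ whenever $\mathcal{M}$ is regular; this works only for regular $\mathcal{M}$, so it does not cover every element. Instead, take the element $\mathcal{M}\mathcal{E}$, which the proof of Theorem~\ref{thm:strpireg} showed to be nilpotent. Set
\[
\mathcal{N}=\mathcal{M}\mathcal{E}, \qquad \mathcal{R}=\mathcal{M}-\mathcal{M}\mathcal{E}=\mathcal{M}(I-\mathcal{E}).
\]
Since $\mathcal{E}$ commutes with $\mathcal{M}$, both $\mathcal{N}$ and $\mathcal{R}$ are polynomials in $\mathcal{M}$ and $\mathcal{E}$, which commute with each other. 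Hence $\mathcal{R}\mathcal{N}=\mathcal{M}^2(I-\mathcal{E})\mathcal{E}=0=\mathcal{N}\mathcal{R}$, so the commutativity requirement holds automatically, and $\mathcal{N}$ is nilpotent by Theorem~\ref{thm:nilpotent}.

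\textbf{Key remaining step.} We must show that $\mathcal{R}=\mathcal{M}(I-\mathcal{E})$ is von Neumann regular. Using $\mathcal{E}\mathcal{U}=\mathcal{U}\mathcal{E}$, we have
\[
\mathcal{M}(I-\mathcal{E})=(\mathcal{E}+\mathcal{U})(I-\mathcal{E})=\mathcal{U}(I-\mathcal{E}),
\]
that is, a unit times a commuting idempotent. Take $\mathcal{X}=\mathcal{U}^{-1}(I-\mathcal{E})$. Since $\mathcal{U}^{-1}$ also commutes with $\mathcal{E}$, we get
\[
\mathcal{R}\mathcal{X}\mathcal{R}=\mathcal{U}\,\mathcal{U}^{-1}\,\mathcal{U}\,(I-\mathcal{E})^3=\mathcal{U}(I-\mathcal{E})=\mathcal{R}.
\]
So $\mathcal{R}$ is regular (in fact strongly regular, since it commutes with $\mathcal{X}$). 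This is exactly the point where commutativity, and not mere cleanness, is needed.

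\textbf{Conclusion and obstacle.} Thus $\mathcal{M}=\mathcal{R}+\mathcal{N}$ with $\mathcal{R}$ regular, $\mathcal{N}$ nilpotent and $\mathcal{R}\mathcal{N}=\mathcal{N}\mathcal{R}$, so every element of $M_n(\mathbb{D})$ is strongly $\mathrm{NR}$-clean. The only real obstacle is making sure that the nilpotency of $\mathcal{M}\mathcal{E}$ and the commutation of $\mathcal{U}^{-1}$ with $\mathcal{E}$ are justified; both follow from $\mathcal{E}\mathcal{M}=\mathcal{M}\mathcal{E}$ and Theorem~\ref{thm:nilpotent}, as already recorded in the proof of Theorem~\ref{thm:strpireg}. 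Alternatively, one could note that this is the Fitting decomposition of a strongly $\pi$-regular element and cite the general fact that strongly $\pi$-regular rings are strongly $\mathrm{NR}$-clean, but the direct argument above avoids that citation.
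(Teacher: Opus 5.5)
Your proposal is correct and is essentially the paper's proof: the same splitting $\mathcal{R}=\mathcal{M}(I-\mathcal{E})$, $\mathcal{N}=\mathcal{M}\mathcal{E}$ from the strongly $\pi$-regular decomposition of Theorem~\ref{thm:strpireg}, differing only cosmetically in that you use the inner inverse $\mathcal{U}^{-1}(I-\mathcal{E})$ where the paper uses $\mathcal{U}^{-1}$ itself (both work). One small precision: nilpotency of $\mathcal{M}\mathcal{E}$ does not follow from $\mathcal{E}\mathcal{M}=\mathcal{M}\mathcal{E}$ alone, but from its real part $AE$ being nilpotent for the Fitting idempotent $E$ (via Theorem~\ref{thm:nilpotent}), which is exactly what that construction records.
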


\begin{proof} Let $\mathcal{M}\in M_n(\mathbb{D})$. By Theorem \ref{thm:strpireg}, $M_n(\mathbb{D})$ is strongly $\pi$-regular so that  $\mathcal{M}$ can be decomposed as $\mathcal{M}=\mathcal{E}+ \mathcal{U}$, where  $\mathcal{E}$ is an idempotent, $ \mathcal{U}$ is a unit, $ \mathcal{E}\mathcal{M}=\mathcal{M} \mathcal{E}$ and $\mathcal{M}\mathcal{E}$ is nilpotent in $M_n(\mathbb{D})$. Set 
$$ \mathcal{R}= \mathcal{M}(I-\mathcal{E}) \mbox{ and } \mathcal{N}= \mathcal{M}\mathcal{E}$$ 
Then $\mathcal{M}=\mathcal{R}+ \mathcal{N}$. Since $ \mathcal{E}\mathcal{M}=\mathcal{M} \mathcal{E}$, and $\mathcal{E}^2=\mathcal{E}$, the elements $\mathcal{R}$ and $\mathcal{N}$ commute
$$\mathcal{R} \mathcal{N}= \mathcal{M}(I-\mathcal{E})\mathcal{M}\mathcal{E}=\mathcal{M}^2\mathcal{E}-\mathcal{M}\mathcal{E}\mathcal{M}\mathcal{E}=0, $$ 
$$\mathcal{N} \mathcal{R}= \mathcal{M}\mathcal{E}\mathcal{M}(I-\mathcal{E})=\mathcal{M}\mathcal{E}\mathcal{M}-\mathcal{M}\mathcal{E}\mathcal{M}\mathcal{E}=0.$$
By assumption $\mathcal{N}=\mathcal{M}\mathcal{E}$ is nilpotent. For regularity of $\mathcal{R}$, since $\mathcal{E}^2=\mathcal{E}$,
$$\mathcal{R}=\mathcal{M}(I-\mathcal{E})=(\mathcal{E}+ \mathcal{U})(I-\mathcal{E})=\mathcal{E}(I-\mathcal{E})+\mathcal{U}(I-\mathcal{E})=\mathcal{U}(I-\mathcal{E}).$$
Note that $(I-\mathcal{E})$ is an idempotent. Let $\mathcal{U}^{-1}$ be the inverse of unit $\mathcal{U}$. Then
$$\mathcal{R}\mathcal{U}^{-1}\mathcal{R}=\mathcal{U}(I-\mathcal{E})\mathcal{U}^{-1}\mathcal{U}(I-\mathcal{E})=
\mathcal{U}(I-\mathcal{E})^2=\mathcal{U}(I-\mathcal{E})=\mathcal{R}.$$ Hence $\mathcal{R}$ is regular with inner inverse $\mathcal{U}^{-1}$. Consequently, $M_n(\mathbb{D})$ is  strongly $\mathrm{NR}$-clean.
\end{proof}

\section{Conclusion}

In this paper, we characterized the fundamental regularity and clean properties of the matrix ring over dual numbers $M_n(\mathbb{D})$. We distinguished between global and local behaviors. Some properties hold for the whole ring $M_n(\mathbb{D})$, while others depend on the choice of an individual matrix.

\begin{itemize}
	\item \textbf{Global Ring Properties (True for all of $M_n(\mathbb{D})$):}
	\begin{itemize}
        \item \textbf{$\pi$-Regular and Strongly $\pi$-Regular:} $M_n(\mathbb{D})$ inherits strong $\pi$-regularity from $M_n(\mathbb{R})$, and consequently is $\pi$-regular.
		\item \textbf{Clean and Strongly Clean:} $M_n(\mathbb{D})$ is both clean and strongly clean. The off-diagonal blocks are uniquely determined by solving Sylvester matrix equations.
		
		\item \textbf{$r$-Clean and Strongly $r$-Clean:}  Since $M_n(\mathbb{D})$ is strongly clean, it is both strongly $r$-clean and $r$-clean ring.
        \item \textbf{$\mathrm{NR}$-Clean and Strongly $\mathrm{NR}$-Clean:} Since $M_n(\R)$ is regular and $B\epsilon$ is nilpotent, $M_n(\mathbb{D})$ is $\mathrm{NR}$-Clean. Moreover, using the strongly $\pi$-regular structure of $M_n(\mathbb{D})$, every element admits a commuting decomposition into a regular and a nilpotent part, hence $M_n(\mathbb{D})$ is strongly $\mathrm{NR}$-Clean.

	\end{itemize}
	
	\item \textbf{Failed Global Properties (Not true for all of $M_n(\mathbb{D})$):}
	\begin{itemize}
		\item \textbf{von Neumann Regular and Strongly Regular:} $M_n(\mathbb{D})$ is not von Neumann regular because an arbitrary dual matrix $\mathcal{M} = A + B\epsilon$ requires the compatibility condition $(I-AA^{-})B(I-A^{-}A)=0$, which fails in general. For 
instance, for $\mathcal{M} = B\epsilon$ with $B \neq 0$. Consequently,  $M_n(\mathbb{D})$ is not strongly regular either. 
		\item \textbf{Nil-Clean and Strongly Nil-Clean:} $M_n(\mathbb{D})$ is not nil-clean in general (it is nil-clean only over the field $\mathbb{F}_2$). Consequently, $M_n(\mathbb{D})$ is not strongly nil-clean either.
	\end{itemize}
	
	\item \textbf{Local Matrix Properties (For a dual matrix $\mathcal{M}=A+B\epsilon$):}
	\begin{itemize}
		\item $\mathcal{M}$ is invertible $\iff$ $A$ is invertible.
		\item $\mathcal{M}$ is idempotent $\iff$ $A$ is idempotent and $AB+BA=B$.
		\item $\mathcal{M}$ is nilpotent $\iff$ $A$ is nilpotent.
        \item $\mathcal{M}$ is regular $\iff$ $A$ is regular and $(I-AA^{-})B(I-A^{-}A)=0$ for some inner inverse $A^{-}$ of $A$.
        \item $\mathcal{M}$ is $\pi$-regular  $\iff$ there exists $k\geq 1$ such that $A^k$ is regular and  $(I-A^k(A^k)^{-})C_k(I-(A^k)^{-}A^k)=0$ for some inner inverse $(A^k)^{-}$, where $ C_k:=\sum_{i=0}^{k-1}A^iBA^{(k-1)-i}$.
		\item $\mathcal{M}$ is nil-clean (resp. strongly nil-clean) $\iff$ $A$ is nil-clean (resp. strongly nil-clean) in $M_n(\mathbb{R})$.
        
	\end{itemize}
\end{itemize}

\bigskip

\noindent\textsc{BERR\.{I}N  \c SENT\" URK} ($^*$)\\
\noindent Department of Mathematics, TED University, 06420, Ankara, Turkey\\
\noindent\textit{e-mail address}: \texttt{berrin.senturk@tedu.edu.tr}\\

\noindent\textsc{NESL\.{I}HAN AY\c SEN \" OZBAY}\\
\noindent Department of Mathematics, \c Cankaya University, 06815, Ankara,  Turkey\\
\noindent\textit{e-mail address}: \texttt{ozbay@cankaya.edu.tr}

\end{document}